\begin{document}

\title{A new family of infinitely braided Thompson's groups}
\date{\today }
\author{Julio Aroca and Mar\'{i}a Cumplido}


\maketitle
\theoremstyle{plain}
\newtheorem{theorem}{Theorem}

\newaliascnt{lemma}{theorem}
\newtheorem{lemma}[lemma]{Lemma}
\aliascntresetthe{lemma}
\providecommand*{\lemmaautorefname}{Lemma}

\newaliascnt{proposition}{theorem}
\newtheorem{proposition}[proposition]{Proposition}
\aliascntresetthe{proposition}
\providecommand*{\propositionautorefname}{Proposition}

\newaliascnt{corollary}{theorem}
\newtheorem{corollary}[corollary]{Corollary}
\aliascntresetthe{corollary}
\providecommand*{\corollaryautorefname}{Corollary}

\newaliascnt{conjecture}{theorem}
\newtheorem{conjecture}[conjecture]{Conjecture}
\aliascntresetthe{conjecture}
\providecommand*{\conjectureautorefname}{Conjecture}

\newtheorem*{question*}{Question}

\theoremstyle{remark}

\newaliascnt{claim}{theorem}
\newaliascnt{remark}{theorem}
\newtheorem{claim}[claim]{Claim}
\newtheorem{remark}[remark]{Remark}
\newaliascnt{notation}{theorem}
\newtheorem{notation}[notation]{Notation}
\aliascntresetthe{notation}
\providecommand*{\notationautorefname}{Notation}

\aliascntresetthe{claim}
\providecommand*{\claimautorefname}{Claim}

\aliascntresetthe{remark}
\providecommand*{\remarkautorefname}{Remark}

\newtheorem*{claim*}{Claim}
\theoremstyle{definition}

\newaliascnt{definition}{theorem}
\newtheorem{definition}[definition]{Definition}
\aliascntresetthe{definition}
\providecommand*{\definitionautorefname}{Definition}

\newaliascnt{example}{theorem}
\newtheorem{example}[example]{Example}
\aliascntresetthe{example}
\providecommand*{\exampleautorefname}{Example}


\def\autorefspace{\hspace*{-0.5pt}}
\def\sectionautorefname{Section\autorefspace}
\def\subsectionautorefname{Section\autorefspace}
\def\subsubsectionautorefname{Section\autorefspace}
\def\figureautorefname{Figure\autorefspace}
\def\subfigureautorefname{Figure\autorefspace}
\def\tableautorefname{Table\autorefspace}
\def\equationautorefname{Equation\autorefspace}
\def\Itemautorefname{item\autorefspace}
\def\Hfootnoteautorefname{footnote\autorefspace}
\def\AMSautorefname{Equation\autorefspace}

\newcommand{\quotient}[2]{{\raisebox{.2em}{$#1$}\left/\raisebox{-.2em}{$#2$}\right.}}
\def\Stab{{\rm Stab}}

\newcommand{\myref}[2]{\hyperref[#1]{#2~\ref*{#1}}}

\begin{abstract}

We present a generalization of the Dehornoy-Brin braided Thompson group~$BV_2$ that uses recursive braids. Our new groups are denoted by $BV_{n,r}(H)$, for all $n\geq 2,r\geq 1$ and $H \leq \mathcal{B}_n$, where $\mathcal{B}_n$ is the braid group on $n$ strands. We give a new approach to deal with braided Thompson groups by using strand diagrams. We show that $BV_{n,r}(H)$ is finitely generated if $H$ is finitely generated.
\medskip

{\footnotesize
\noindent \emph{2000 Mathematics Subject Classification. 20F65, 20F05, 20F36}.

\noindent \emph{Key words. Thompson groups, braid groups, strand diagrams, rewriting systems.}}

\end{abstract}
\section{Introduction}
\label{ref:intro}

The aim of this article is to define new families of Thompson-like groups that generalize the ones defined independently by \citep{Dehornoy} and \citep{Brin1,Brin2}. Thompson-like groups are based on the notion of cloning system, firstly defined by  \citep{WitzelZaremsky}. A cloning system on a family of groups $(G_n)_{n \in \mathbb{N}}$ is a set of axioms and maps acting on $G_n$ which allows to get a group $\mathscr{T}(G_*)$, called the generalized Thompson group for the cloning system or Thompson-like group (see \citealp{Zaremsky} and \citealp{WitzelZaremsky} for more details). The motivation for defining these new families is that Thompson-like groups have proven to be new examples of interesting groups. Richard Thompson used the celebrated groups $F, T$ and $V$ in 1965 to construct finitely presented groups with unsolvable word problems (see \citealp{CFP,Higman} for the definitions of $F,T$ and $V$). Other interesting families of Thompson-like groups (constructed in a different way) are branch groups \citep{BGS} and self-similar groups \citep{BGN}, which contain the first examples of groups of intermediate growth, like Grigorchuk's group \citep{Grigorchuk}.

\medskip

We will focus our attention on the Thompson-like groups coming from a cloning system on the family of groups $(\mathcal{B}_n)_{n \in \mathbb{N}}$, the Artin braid groups \citep{Artin1}. In \citep{Brin1,Brin2}, Brin uses this family  to define a braided Thompson group $V_{br}$, that we denote in our article as $BV_2$; one of the first examples of Thompson-like groups with no torsion, apart from the family of Higman-Thompson groups $F_n$, where $n \geq 2$ \citep{Higman}. The usual way to understand $BV_2$ is as the group of affine and orientation-preserving isotopies from the binary Cantor set $\mathfrak{C}_2$ to itself, using finite covers consisting of dyadic intervals. Brin proves that $BV_2$ is finitely presented by using a purely algebraic argument based on Zappa-Sz\'ep products.

\medskip

Using a similar approach, we present in this article new families of Thompson-like groups based on the previous ones. In the first place, we give the natural generalization of $BV_2$ by using $r$ copies of $n$-ary Cantor sets $\mathfrak{C}_n$ for all $r \geq 1$ and $n \geq 2$, obtaining all braided Thompson-like versions $BV_{n,r}$ of the well-known Higman-Thompson groups $V_{n,r}$, defined on \citep{Higman}. Next, we apply a recursive definition of braid (inspired by the definition of self-similar group) to construct a family of infinitely braided Thompson-like groups $BV_{n,r}(H)$, where $H \leq \mathcal{B}_n$. These families arise also from cloning systems, but we prefer to use a more `geometric' definition for the groups, as this allow us to prove also finite generation. Therefore, we leave the formal definition of cloning system for the interested reader, see \citep{SZ}, \citep{WitzelZaremsky} and \citep{Zaremsky}.

Moreover, it is also possible to define the infinitely braided versions of $BV_{n,r}$ and~$BV_{n,r}(H)$: $\widehat{BV_n}$ and~$\widehat{BV_n}(H)$. Note that all these families can be compiled on only two: $BV_{n,r}(H)$ and~$\widehat{BV_n}(H)$, as we can consider $H = Id$. 

\medskip

The first part of the article is devoted to prove that all these families are groups. For this aim we use a generalization of the Belk-Matucci theory of strand diagrams \citep{BelkMatucci,Aroca} and rewriting systems \citep{Newman} to give a bijection between elements of~$BV_{n,r}(H)$ and braided strand diagrams, a family of oriented graphs with labelled vertices (see Section~3.2 for all the details). As it is easier to prove that braided strand diagrams form a group, we obtain the desired result. Finally, we prove the main theorem of this article: 

\begin{theorem}
If $H \leq \mathcal{B}_n$ is finitely generated, the groups $BV_{n,r}(H)$ are finitely generated for every $r \geq 1$, $n\geq 2$.
\end{theorem}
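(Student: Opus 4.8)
The plan is to prove finite generation by combining three ingredients: the finite generation of the underlying Higman-Thompson group $V_{n,r}$, a finite generating set for $H$, and the cloning/recursive structure that governs how braids interact with the tree-splitting moves. The strand-diagram description established earlier in the article is the crucial tool: since $BV_{n,r}(H)$ is in bijection with braided strand diagrams, it suffices to show that every braided strand diagram can be written as a product of finitely many ``elementary'' diagrams.

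\medskip

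First I would fix a finite generating set $\{h_1,\dots,h_k\}$ for $H$ and recall a finite generating set for $V_{n,r}$ (for instance the elementary expansions/contractions together with the basic transpositions, as in \citealp{Higman}). The idea is that a general element of $BV_{n,r}(H)$ decomposes, along the lines of the Zappa-Sz\'ep structure used by Brin for $BV_2$, into a ``tree part'' that lives over $V_{n,r}$ and a ``braiding part'' that records an element of the relevant braid group on the leaves. Concretely, I would argue that any braided strand diagram is equivalent, via the rewriting moves, to one in a normal form: a sequence of splittings (an expansion tree), followed by a braid on the resulting strands, followed by a sequence of mergings (a contraction tree). The expansion and contraction parts are controlled by the generators of $V_{n,r}$; the remaining work is to generate the braid in the middle from finitely many pieces.

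\medskip

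The second step, and what I expect to be the main obstacle, is controlling the braid part uniformly across all levels of the tree. Because the braiding is \emph{recursive} --- a crossing at one node clones into crossings at deeper nodes according to the $H$-labels --- the braids that appear are not simply elements of a single fixed braid group $\mathcal{B}_m$ but are assembled from copies of $H$ sitting at different nodes of the tree. The key lemma I would aim to establish is that a crossing of two strands deep in the diagram can be ``pushed up'' to the root using the cloning relations, expressing it in terms of a fixed finite set of generators: the elementary braid generators $\sigma_i$ of the Artin braid group together with the finite set $h_1,\dots,h_k$ inserted at a single (say the leftmost or root-adjacent) node. The recursive cloning axioms should let me trade a braid generator acting at a leaf for a braid generator acting higher up conjugated by expansion moves, at the cost of introducing finitely many $H$-labels; the care needed is to check that this reduction terminates and does not require infinitely many distinct $h_i$'s, which is precisely where finite generation of $H$ is used.

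\medskip

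Finally, I would assemble these pieces: the expansion trees, the contraction trees, the basic transpositions of $V_{n,r}$, the braid generators $\sigma_i$, and the finite set of $H$-generators together form a finite candidate generating set, and the normal-form decomposition together with the push-up lemma shows every element is a product of these. I expect the termination/finiteness argument for the push-up step to be the delicate point, and I would likely frame it as a confluent, terminating rewriting system on braided strand diagrams (invoking \citealp{Newman}) so that the reduction to the finite generating set is guaranteed to halt. Routine verifications --- that each listed generator genuinely lies in $BV_{n,r}(H)$ and that the cloning relations hold at the level of strand diagrams --- I would relegate to the earlier structural results.
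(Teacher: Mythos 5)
Your outline has the right skeleton --- tree part, braid part, label part, plus a ``push deep crossings/labels to a fixed position'' lemma --- and this matches the paper's high-level strategy. But there are two genuine gaps. First, the heart of the matter is never proved: you need to show that an element whose trees and braid are arbitrarily large factors as a product of elements of \emph{bounded} complexity, and you only state this as a goal (your ``push-up lemma''), admitting it is the delicate point. The paper does this concretely via \emph{ribbons}: given $(T_1,\beta,\lambda,T_2)$ of depth $d$, one chooses a small tree $T_3$ and simple braids $r_1,r_2$ connecting final carets of $T_1,T_2$ to final carets of $T_3$, so that $(T_1,\beta,\overrightarrow{Id},T_2)\equiv(T_1,r_1,\overrightarrow{Id},T_3)(T_3,r_1^{-1}\beta r_2^{-1},\overrightarrow{Id},T_3)(T_3,r_2,\overrightarrow{Id},T_2)$, and each factor admits a $1$-move (hence has depth $<d$); the recursive labels are then handled by conjugating with a braid $\gamma$ carrying the labelled strand to position $1$, turning the label into a fixed depth-$1$ generator (\autoref{prop:fg}). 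Induction on depth then yields \autoref{cor:fg}, and the residual infinite families (the $\sigma_i$ in $\mathcal{B}_m$ for unbounded $m$, and the type-$e$ elements) are killed by the shifting argument of \autoref{lemma2:fg} and by finite generation of $F_n$ (\autoref{lemma3:fg}). Your proposal leaves exactly these constructions unsupplied, and note that ``the braid generators $\sigma_i$'' is an infinite set until such a shifting lemma is proved.

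Second, the tool you propose for the delicate step cannot do that job. Newman's lemma applied to the diagram moves gives uniqueness of reduced diagrams, which the paper uses in \autoref{prop:equiv} and \autoref{cor:bijection} to show the groups are well defined --- it is not a factorization device. Every move replaces a diagram by an equivalent diagram representing the \emph{same} group element, so termination and confluence of that rewriting system tell you nothing about writing an element as a product of members of a finite set; for that you need a well-founded induction on a complexity measure (here, depth) together with an explicit construction of the factors, which is precisely what the ribbon argument provides. Your normal-form step (splits, then braid, then merges) is likewise already the content of \autoref{cor:bijection} and only recovers the triple representation; it does not advance finite generation. Finally, the appeal to Brin's Zappa-Sz\'ep structure is a route the paper deliberately avoids, and you would still face the same two obstructions (unbounded braid index and labels at arbitrary nodes) after invoking it, so it does not substitute for the missing lemma.
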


For this purpose, we use a different and simpler approach than the one used by Brin. With the help of Higman-Thompson's groups $F_{n,r}$ and the properties of braids and diagrams, we generalize the idea of \citep{Higman} to give explicit generators for all these groups (see \autoref{theorem:fg}, Section~4.1.1 and Section~4.3).  

\medskip

To summarize, in \autoref{sec:2} we define $BV_{n,r}(H)$ and $\widehat{BV_n}(H)$ for every $r \geq 1$, $n \geq 2$ and $H \leq \mathcal{B}_n$. In \autoref{sec:3} we prove they are in fact, groups, by defining braided strand diagrams and applying the theory of rewriting systems to them. Finally, the aim of \autoref{sec:fg} is to prove that $BV_{n,r}(H)$ is finitely generated (if $H$ is finitely generated). We give explicit set of generators for $r = 1$ and $H=Id,\, \mathcal{B}_n$ and when $H$ is a standard parabolic subgroup of $\mathcal{B}_n$.


\section{The infinitely braided groups \texorpdfstring{$\boldsymbol{BV_n(H)}$}{BVnH}}
\label{sec:2}
In this section we define the main objects of this article: the braided versions of Higman-Thompson's groups $V_{n,r}$, $BV_{n,r}$; and its generalization, the family of infinitely braided groups  $BV_{n,r}(H)$. The fact that these are indeed groups, is proved in  \autoref{sec:3}.

\subsection{Descriptions of \texorpdfstring{$\boldsymbol{V_{n,r}}$}{Vnr}, \texorpdfstring{$\boldsymbol{BV_{n,r}}$}{BVnr} and \texorpdfstring{$\boldsymbol{\widehat{BV_n}}$}{hatBVn}}

Let $\mathfrak{C}_n$ be the $n$-adic Cantor set, which is constructed inductively as follows: $\mathfrak{C}_n^1$ corresponds to first subdividing $\mathfrak{C}_n^0= [0,1]$ into $2n-1$ intervals of equal length, numbered $1, \ldots, 2n-1$ from left to right, and then taking the collection of odd-numbered subintervals. We will renumber these intervals from left to right and denote them $C^1_1,\dots, C^1_n$. Next, $\mathfrak{C}_n^2$ is obtained from $\mathfrak{C}_n^1$ by applying the same procedure to each interval $C^1_i$ to obtain $C^2_{(i-1)n +1}, C^2_{(i-1)n +2},\dots, C^2_{(i-1)n +n}$. 

\smallskip

We recursively define every $\mathfrak{C}_n^j$ with $j>1$ and its intervals. Let $\mathfrak{C}_n$ be the intersection of all~$\mathfrak{C}_n^i$. The elements of the Thompson's group $V_n$ are defined using covers of~$\mathfrak{C}_n$ by pairwise disjoint intervals of the form~$C_i^j$ chosen from any~$\mathfrak{C}^j_n$. For any pair of covers $C$ and $C'$ with the same number of intervals, we define an affine and orientation preserving map from the elements of $C$ to the elements of~$C'$. Then we restrict the map to $\mathfrak{C}_n$. This restriction is a homeomorphism of~$\mathfrak{C}_n$. Finally, we define~$V_n$ as the set of all maps of this kind, which turns to be a group under composition. 

\smallskip

The elements of $V_n$ are coded by pairs of finite full $n$-ary trees together with a bijection $\tau$ between their leaves. An example of such a pair is depicted on \autoref{fig:elementVn}, where the left (resp. the right) tree indicates how the first (resp. the second) cover is split. These trees are respectively called \textit{domain tree} $T$ and \textit{range tree}~$T'$. Therefore, any element $v \in V_n$ is represented as a triple $(T, \tau, T')$. Notice that this representative is not unique. A well known subgroup of $V_n$, that will be used in the last section, is $F_n$, consisting of all elements represented by triples $(T, \tau, T')$ where $\tau$ is trivial.

\medskip

The set of leaves of all possible $n$-ary trees is in bijection with the set of finite words on the alphabet $\mathcal{A}_n = \{0, \dots, n -1\}$, denoted by $\mathcal{A}^*_n$. The word assigned to each leaf depends on the path taken from the root to reach the leaf. For example, observe that in \autoref{fig:elementVn} the set of leaves of the first tree is $\{00, 01,020,021,022,1,2\}$. This labelling induces a natural order on the set of leaves of a tree. By abuse of notation, we will say that a word in $\mathcal{A}^*_n$  is its represented leaf.

Let $v \in V_n$ be represented by a tree pair $(T, \tau, T')$ such that $T$ and $T'$ have $l$ leaves. Note that there are two numbers assigned on each leaf: one of them is the coordinate of the leaf, that is, a finite word on the alphabet $\{0, \dots, n -1\}$. The other one is a number in $\{1, \dots, l\}$ depending on the bijection $\tau$.

\begin{definition}
Let $T$ be finite full $n$-ary tree. We define a \emph{caret} as a subtree of $T$ consisting of a set of leaves of the form $\{w0, w1, \dots, w(n-1)\}$, the vertex $w$ and the set of edges linking $w$ with $wi$ for all $i \in \{0,\dots, n-1\}$, for any finite word $w \in \mathcal{A}^*_n$. We say that a caret is $\emph{final}$ if its set of leaves is also a set of leaves of $T$. We represent a caret by using its set of leaves, so we may omit $\{w\}$. 
\end{definition}

\noindent
As example, in \autoref{fig:elementVn} the set of leaves $\{00,01,02\}$ of $T$ is a caret, and $\{020,021,022\}$ of $T$ is a final caret. 

\begin{definition}
Let $T$ be a full finite $n$-ary tree. Let $w \in \mathcal{A}^*_n$ be a leaf of $T$. We denote by $T[w]$ the tree obtained from $T$ by appending a final caret to $w$. Similarly, we define $T[c]^{-1}$ as the tree obtained from $T$ by removing a specific final caret $c = \{w0, \dots, w(n-1)\}$ from it. We say that $T[w]$ (resp. $T[c]^{-1}$) is an \emph{expansion} (resp. \emph{reduction}) of $T$.
\end{definition}

\noindent
Keep in mind that, in a composition of expansions $T[w][w']$, $w'$ must always be a leaf of~$T[w]$, although it does not need to be a leaf of~$T$.
 
\medskip
 
There exist infinitely many triples  $(T, \tau, T')$ which define the same element of $V_n$. Let $v = (T,\tau,T')$ be a homeomorphism of $\mathfrak{C}_n$ where a cover $c \in C$ is mapped to a cover $c' \in C'$. We can consider the subdivision of both $c$ and $c'$ into $n$ pieces $c_1, \dots, c_n$ and $c'_1, \dots, c'_n$ such that the affine map takes $c_i$ to $c'_i$ for all $i\in \{1, \dots, n\}$. The corresponding tree-pair representative of the subdivided coverings leads to the same element $v$, previously defined in terms of~$C$ and~$C'$. In terms of trees, we add a final caret on the leaves $w$ and $w'$ representing the intervals $c$ and $c'$ respectively. Thus we obtain a new triple $v = (T[w], \tau',T'[w'])$ where $\tau'$ is the corresponding bijection including the new set of leaves. 

\medskip
\noindent
We say that a tree-pair representative of an element is \emph{reduced} if the number of covers of both~$C$ and~$C'$ (that is, the number of leaves of~$T$ and~$T'$) is minimal. A simple way to distinguish non-reduced elements is by checking if there exists a final caret  $\{w0, w1, \dots, w(n-1)\}$ which is mapped to another final caret $\{w'0, w'1, \dots, w'(n-1)\}$ such that $wi$ is mapped to $w'i$ for every $i \in \{0,\dots,n-1\}$.

\begin{figure}[h]
\centering
\labellist
\pinlabel $1$ at 5 23
\pinlabel $2$ at 12.5 23
\pinlabel $3$ at 12.5 -3
\pinlabel $4$ at 20 -3
\pinlabel $5$ at 28 -3
\pinlabel $6$ at 28.5 50
\pinlabel $7$ at 45 50

\pinlabel $3$ at 63 23
\pinlabel $2$ at 70.5 23
\pinlabel $7$ at 78.5 23
\pinlabel $4$ at 96 23
\pinlabel $1$ at 103.5 23
\pinlabel $6$ at 111.5 23
\pinlabel $5$ at 87 50

\endlabellist
\centering
\includegraphics[scale=1.3]{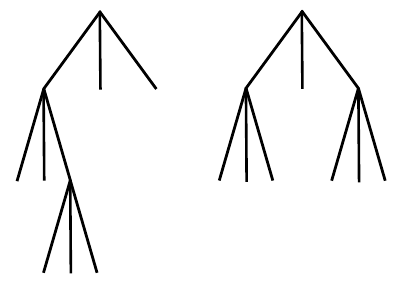}
\caption{An element of $V_3$.}
\label{fig:elementVn}
\end{figure}

Finally, the way to compose elements of $V_n$ is as follows: given two elements $v = (T, \tau, T')$ and $w = (T', \tau',T'')$ the composition is the element $vw = (T, \tau\tau', T'')$. Note that the range tree of~$v$ and the domain tree of~$w$ must be equal. If not, it is possible to expand the trees of both~$v$ and~$w$ until we get tree-pair representatives which can be composed.

\medskip

In a similar way, we can consider $r$ copies of $\mathfrak{C}_n$ instead of only one, obtaining the group~$V_{n,r}$. In that case, elements are represented as a pair of forests of $r$ finite full $n$-ary trees together with a bijection $\tau$ between their leaves. As before, if $\tau = Id$, we obtain groups $F_{n,r}$. See \citep{CFP,Higman} for an introduction on these groups. 

\medskip

The previous description of $V_n$ can be easily extended to obtain the braided Thompson's groups $BV_n$. Braided Thompson's group $BV_2$ was introduced in \citep{Brin1,Brin2} and \citep{Dehornoy}. As before, given any pair of covers $C$ and $C'$ of $\mathfrak{C}_n$ with the same number $m$ of elements, we embed them in $\mathbb{R}\times \{1\}$ and $\mathbb{R} \times \{0\}$ respectively. Then, we define an orientation-preserving isotopy with compact support from the elements of $C$ to the elements of~$C'$. In this case, the isotopy is represented by a braid~$\beta$ with $m$~strands.

\begin{definition}

A \emph{braid} is a collection of $m$~disjoint paths in a cylinder connecting $m$ points of its upper disk to $m$~points of its lower disk and running monotonically in the vertical direction (see \autoref{fig:elementBVn}). Two braids $\beta$, $\beta'$ are equivalent if we can continuously deform $\beta$ into $\beta'$ without intersecting the paths. The equivalence classes of these objects (that will be called braids by an abuse of notation) are the elements of the \emph{braid group} with $m$ strands, $\mathcal{B}_m$, introduced in \citep{Artin1}, which is presented as follows:
\[\mathcal{B}_m=\{\sigma_1,\dots,\sigma_{m-1} \, | \, \sigma_i \sigma_j =\sigma_j \sigma_i \text{ if } |i-j|>1,\, \sigma_i\sigma_j\sigma_i =\sigma_j\sigma_i\sigma_j \text{ if } |i-j|=1.\}\]
Here $\sigma_i$ (resp. $\sigma_i^{-1}$) is the braid in which the strand in the $i$-th position passes over (resp. under) the strand in the $(i+1)$-th position. The set of $\sigma_i$'s is called the set of \emph{Artin generators.} 
\end{definition}

A representative of an element $v \in BV_n$ is a pair of finite full $n$-ary trees together with a braid $\beta$ between their leaves, that is, $v = (T,\beta,T')$. This triple is called \emph{braided tree-pair}. The composition of these elements works as before. In order to better understand braided tree-pairs, we will use \emph{braided diagrams} in which the range tree is pictured upside down below the domain tree, as pictured in  \autoref{fig:elementBVn}. These diagrams will be thoroughly used in this paper. 

\medskip
\noindent
In a similar manner, by considering a finite number $r$ of copies of $\mathfrak{C}_n$ and using the same definition as above, we get the group~$BV_{n,r}$. 

\begin{figure}[h]
\centering
\labellist
\pinlabel $1$ at 202 160
\pinlabel $2$ at 218 160
\pinlabel $3$ at 234 160
\pinlabel $4$ at 251 160
\pinlabel $5$ at 268 160

\pinlabel $\sigma_3$ at 280 134
\pinlabel $\sigma_2$ at 280 115
\pinlabel $\sigma^{-1}_1$ at 283 96
\pinlabel $\sigma^{-1}_4$ at 283 70
\pinlabel $\sigma^{-1}_3$ at 283 50

\endlabellist
\includegraphics[scale=1]{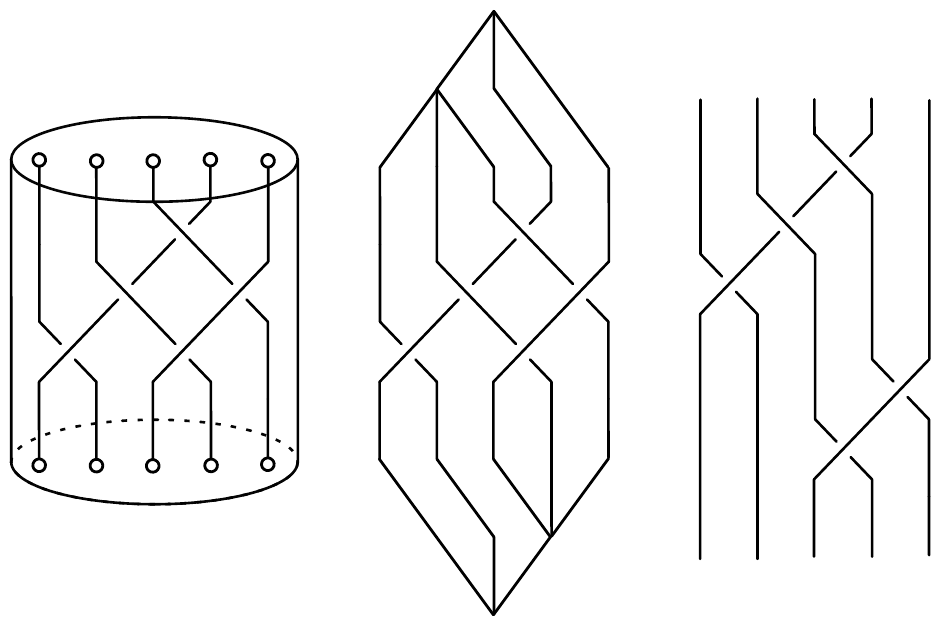}
\caption{An element of $BV_3$ with its braid $\beta = \sigma_3\sigma_2\sigma^{-1}_1\sigma^{-1}_4\sigma^{-1}_3$.}
\label{fig:elementBVn}
\end{figure}

\bigskip

The group  $\widehat{BV_n}$ is built by embedding a countable number of copies of $\mathfrak{C}_n$, one on each interval $[2i,2i+1]\times\{1\}$ and $[2i,2i+1]\times\{0\}$ of $\mathbb{R}\times \{1\}$ and $\mathbb{R}\times \{0\}$ respectively. Consider two infinite covers $C$ and $C'$ such that the intervals are pairwise disjoint, as before, and all but finitely many of them are of the form $[2i,2i+1]\times\{1\}$ for $C$ (resp. of the form $[2i,2i+1]\times\{0\}$ for~$C'$). Finally, we map $C$ to $C'$ by an isotopy of $\mathbb{R}^2$, such that the images of the chosen intervals are parallel to the $x$-axis. Since the number of intervals is infinite, the isotopy could be a ``shift'' taking place for large values of $x$. We impose that a shift must be done by an isotopy of the form $$(x,y) \rightarrow (x+td(1- \vert y  \vert ),y) $$
outside a compact, for $\vert y \vert < 1$, $x > K$ for some positive constant $K$; and by the identity otherwise. The integer $d$ is the total amount of
shift and $t$ is the parameter of the isotopy, see \citep{Brin1}. An example of such an element is depicted on \autoref{fig:elementwBVn}.

\begin{figure}[h]
\centering
\includegraphics[scale=1]{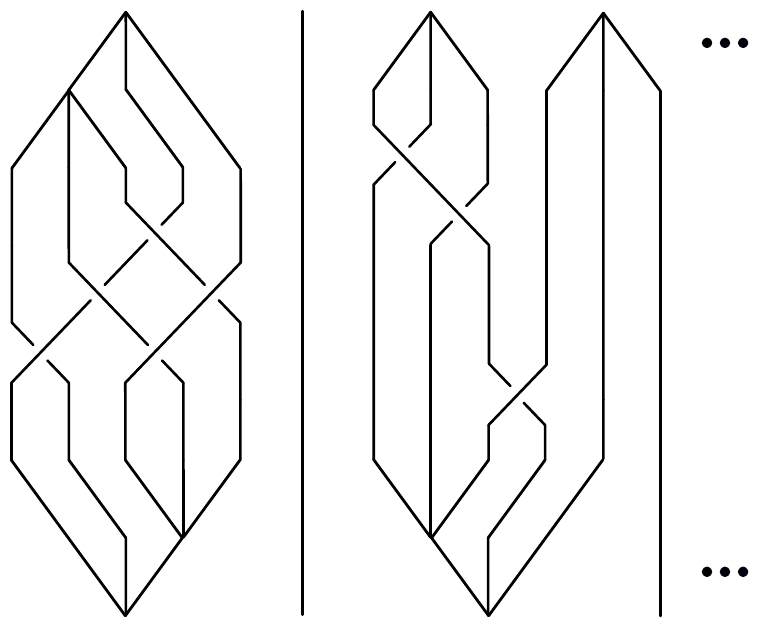}
\caption{An element of $\widehat{BV_3}$.}
\label{fig:elementwBVn}
\end{figure}

\bigskip
\noindent
The proof of the fact that $BV_2$ and $\widehat{BV_2}$ are groups can be found in \citep{Brin1}. In  \autoref{sec:3}, we will prove the same for our groups, as well as for the ones defined hereunder. Finally, the infinite versions $\widehat{V_n}$ and $\widehat{F_n}$ are defined in the same way as before, see \citep{Brown}.

\subsection{Descriptions of \texorpdfstring{$\boldsymbol{BV_n(H)}$}{BVnH} and \texorpdfstring{$\boldsymbol{\widehat{BV_n}(H)}$}{hatBVnH}}

Let $H$ be a subgroup of the group of braids on $n$ strands $\mathcal{B}_n$. The aim of this paper is to define new groups $BV_n(H)$ as a generalization of $BV_n$.

\medskip

Let $C$ and $C'$ be two covers of $\mathfrak{C}_n$ with the same number of elements. Let $c \in C$, $c' \in C'$, and $h \in H \leq \mathcal{B}_n$. A \textit{recursive braid} of type $h$ between $c$ and $c'$ is a braid with infinitely many strands obtained by the following process: Replace $c$ and $c'$ by two subcovers $c_1, \dots, c_n$ and $c'_1,\dots,c'_n$ respectively, such that~$h$ is an isotopy from $c_i$ to $c'_j$, (that is, $c_1, \dots, c_n$ and $c'_1,\dots,c'_n$ are braided by~$h$). We repeat this process on every $c_i$ and $c'_j$ by subdividing them and applying~$h$ again, and so on. 
In \autoref{fig:rcilindro}, we can see graphically how to construct a recursive braid. We define the composition of a recursive braid of type $h_1$ between $c$ and $c'$ with a recursive braid of type $h_2$ between $c'$ and $c''$ as the recursive braid of type $h_1h_2$ between $c$ and $c''$.
 
 \begin{figure}[h]
\centering
\labellist
\pinlabel $\sigma_1$ at 60 83
\pinlabel $\sigma_1$ at 32 48
\pinlabel $\sigma_1$ at 68 48

\endlabellist
\centering
\includegraphics[scale=1.3]{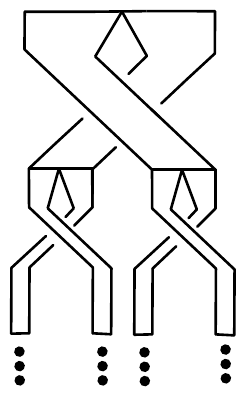}
\caption{An example of recursive braid where $h = \sigma_1$.}
\label{fig:rcilindro}
\end{figure}

\medskip 
 
The group $BV_n(H)$ is the group of elements in $BV_n$ together with (possibly) recursive braids between covers of $\mathfrak{C}_n$. The way to represent an element $v \in BV_n(H)$ is as a triple $(T,\beta,T') \in BV_n$ together with a set $\lambda=\{h_i\}_{i=1}^m, h_i \in H \leq \mathcal{B}_{n}$, where $m$ is the number of leaves of both~$T$ and~$T'$. This set corresponds to the set of recursive braids on the $m$~strands of~$\beta$. Each label indicates that there is a recursive braid of type $h_i$ between the $\beta(i)^{-1}$-th and $i$-th intervals linked by the corresponding strand of~$\beta$. Therefore, we write $v = (T,\beta,\lambda,T')$. As before, there are infinitely many tree pair representations of the same element $v \in BV_n(H)$. In this case, final carets are mapped by elements of $H$: if some $c\in C$ is mapped to $c' \in C'$ with a recursive braid $h$, then the same isotopy maps the subcovering $c_i$ to $h(c_i) = c'_{j(h)}$ with a recursive braid $h$ for all $i \in \{1,\dots, n\}$, where~$j$ is the permutation on $n$ elements induced by~$h$. In terms of tree-pair representations, $v$ maps a final caret to another one following the rules of $h$. Finally, note that elements of~$BV_n$ can be expressed in terms of recursive braids, where $\lambda = \{Id, \dots, Id\}$. Therefore, $BV_n= BV_n(Id)$.

\medskip
\noindent
The group $\widehat{BV_n}(H)$ is defined in the same way as $\widehat{BV_n}$, by adding recursive braids. Similarly, if we consider a finite number $r$ of copies of $\mathfrak{C}_n$, we obtain $BV_{n,r}(H)$. 


\section{Braided diagrams and rewriting systems}
\label{sec:3}

In this section we prove that all the previously defined groups are in fact groups. In order to do that, we use a generalization of the theory of strand diagrams \citep{BelkMatucci,Aroca} and rewriting systems \citep{Newman}. This section is heavily inspired on the aforementioned work.

\subsection{Basics about graphs} Consider a directed graph $\Gamma$. Let $V(\Gamma)$ be its set of (possibly labelled) vertices, and let $E(\Gamma) = \{e_1,e_2,\dots, e_s\} \subset V(\Gamma) \times V(\Gamma)$ be the set of oriented edges, which have the form $e = (v,v')$ or $(v',v)$ depending on the orientation. An \textit{oriented path} is a sequence of oriented edges $\{e_1,...,e_t\} = \{(v_{i(1)},v_{j(1)}),\dots, (v_{i(t)},v_{j(t)})\}$ such that $v_{j(k)} = v_{i(k+1)} \, \forall k \in \{1,\dots, t-1\}$. If $v_{j(t)} = v_{i(1)}$, we have an \textit{oriented loop}.

\begin{definition}
The \textit{degree} of a vertex $v \in V(\Gamma)$ is the number of edges which have $v$ as endpoint, that is, those which have the form $(v,v')$ or $(v',v)$ for some $v' \in V(\Gamma)$. If an edge has the form $(v,v)$, $v$ has degree $2$.
\end{definition}

\begin{definition}
A vertex $v$ is a \textit{source} (resp. a \textit{sink}) for a finite set of directed edges if they have~$v$ as starting point (resp. ending point). A vertex~$v$ is a \textit{main source} (resp. a \textit{main sink}) if it is a source (resp. a sink) of degree one. 

Let $n \geq 2$. A \textit{split} (resp. a \textit{merge}) is a vertex $v$ of degree $n+1$ which is a sink for one edge and a source for the others (resp. a source for one edge and a sink for the others). A \textit{white vertex} $v^h$ is a vertex of degree $2$ with a label $h \in H \leq \mathcal{B}_n$.
\end{definition}

\begin{definition}
We say that $\Gamma$ is \textit{acyclic} if the graph has no oriented loops. From now on, $\Gamma$ will be a directed acyclic graph.
\end{definition}

\begin{definition}
A \textit{pitchfork graph} is a graph whose vertices are only main sources, main sinks, white vertices, splits or merges for a fixed $n \geq 2$. 
\end{definition}

\subsection{Braided diagrams} 
Let $g \in BV_n(H)$ for some $n \geq 2$, $H \leq \mathcal{B}_n$ and $({T}, \beta, \lambda, {T}')$ a braided tree-pair representative of~$g$. We recall that we can construct a braided diagram representation $\Gamma'$ from an element $({T}, \beta, {T}')$ of $BV_n$ by picturing $T'$ upside down below $T$ and joining the leaves of both trees with the braid $\beta$ (see again \autoref{fig:elementBVn}). 
The aim if this subsection is to construct a well-defined braided diagram for $g$.

\smallskip
Notice that $\Gamma'$ may not be planar, but 3-dimensional. However, we can consider the obvious planar projection, depicted in \autoref{fig:elementBVn}, that will be called, by abuse of notation, braided diagram.

\begin{definition}
We say that an oriented path of an acyclic graph is a \emph{strand} if it only contains white vertices (excluding its endpoints).
\end{definition}

\begin{definition}
\label{def:braid}
A \textit{braided diagram} is a planar projection $\Gamma:=p(\Gamma')$ of a finite directed acyclic pitchfork graph $\Gamma'$ satisfying the following properties: 

\begin{itemize}

\item Two vertices of $\Gamma'$ never have the same image;

\item No vertex is mapped onto an edge that it is not an endpoint of;

\item The images of the edges of $\Gamma'$ intersect in a finite set of points;

\item The image of a strand cannot intersect itself.

\end{itemize}
The set of edge intersections that are not endpoints is the \emph{set of crossings} $C(\Gamma)$ of $\Gamma$.

\end{definition}

To distinguish isomorphic planar graphs with different crossings, we impose a rotation and a crossing system:

\begin{definition}
Let $\Gamma$ be a braided diagram. A \textit{rotation system} of $\Gamma$ is a map $\rho_{\Gamma}: E(\Gamma) \longrightarrow \{0, \dots, n\}^2$ which gives an order to every edge of $\Gamma$ around its endpoints as follows:
\begin{enumerate}
\item A counter-clockwise order to the directed edges of a split $v$, where the $0$-th edge is the edge which has $v$ as sink.
\item A clockwise order to the directed edges of a merge $v$, where the $0$-th edge is the edge which has $v$ as source.
\item A $0$ to the edge which has a white vertex $v^h$ as sink, and a $1$ to the one which has $v^h$ as source. 
\end{enumerate}
\end{definition}

\begin{definition}
Let $\Gamma$ be a braided diagram. A \textit{crossing system} of $\Gamma$ is a map $\kappa_{\Gamma}: C(\Gamma) \longrightarrow \{1, -1\}$ which gives a label to every crossing of $\Gamma$. Geometrically, the crossings are depicted as in  \autoref{fig:crossings}.
\end{definition}

\begin{figure}[h]
\centering
\includegraphics[scale=1.3]{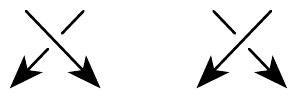}
\caption{The positive and negative crossings.}
\label{fig:crossings}
\end{figure}

\noindent
Observe that $\rho_{\Gamma}$ and $\kappa_{\Gamma}$ completely determine the crossings of $\Gamma$ and will represent the Artin generators composing a braid and their inverses. From now on, we will always consider that a braided diagram is endowed with a crossing and a rotation system. 

\begin{remark}
Note that these systems provide a natural injection $f:\Gamma \cup C(\Gamma)\rightarrow \mathbb{R}^3$. By abuse of notation, we will denote $f(\Gamma \cup C(\Gamma))$ by $f(\Gamma)$. $\Gamma$ will be called a \emph{natural projection} of $f(\Gamma)$.
\end{remark}

\begin{definition}
\label{def:equal}
 Two braided diagrams $ \Gamma_1$ and $\Gamma_2$ are equal if there exists an isomorphism $\phi:\Gamma_1 \rightarrow \Gamma_2$ such that:
\begin{enumerate}
\item $\phi(\rho_{\Gamma_1}) = \rho_{\Gamma_2}$, 
\item $\phi(\kappa_{\Gamma_1}) = \kappa_{\Gamma_2}$, and
\item $\phi(v^{h}) = \phi(v)^{h}$, $\forall v \in V(\Gamma_1)$. 
\end{enumerate}  
for every white vertex $v^h$. 
\end{definition} 

\noindent
It is possible to compose two braided diagrams $\Gamma'$ and $\Gamma''$ if the number of main sinks of $\Gamma'$ is equal to the number of main sources of $\Gamma''$. We then identify from left to right the main sinks of $\Gamma'$ with the main sources of $\Gamma''$ without creating any new crossing. The \textit{composition} is again a braided diagram.  

\medskip
We now explain how to construct the {braided diagram} for an element~$g \in BV_n(H)$, with tree-pair representative $({T}, \beta, \lambda, {T}')$. Take the braided diagram of $({T}, \beta, {T}')$  and then append to the $\beta(i)$-th leaf of~$T$ the corresponding white vertex~$v^{h_i}$. Next, append to the roots of both~${T}$ and~${T}'$ an edge and a vertex. Give the orientation from the vertex appended in ${T}$ to the leaves of~${T}$, from the leaves of ${T}$ to the leaves of ${T}'$ and finally from the leaves of ${T}'$ to the appended vertex of ${T}'$.  The resulting directed acyclic pitchfork graph is the \textit{braided diagram} of $g$.  The process is similar for elements of $BV_{n,r}(H)$ and $\widehat{BV_n}(H)$, but using forests instead of trees. Note that not all braided diagrams are obtained as a consequence of this process.

\subsection{Rewriting systems and confluence}

As said on the beginning of this section, we use the theory of rewriting systems \citep{Newman} in order to prove that there exists a bijection between equivalence classes of braided diagrams and elements of $BV_n(H)$.

\begin{definition} Let $\Gamma'$ be a finite directed acyclic pitchfork graph. Suppose that  $\Gamma'$ has a collection of strands $\mathcal{S}$ that has a neighbourhood isotopic to a cylinder which does not contain other edges. If, up to isotopy, this cylinder defines a braid $\alpha$, we say that~$\mathcal{S}$ is a \emph{sub-braid}~$\alpha$ of~$\Gamma'$.
For example, in \autoref{fig:elementBVn}, the collection of the second and third strands of $\beta$ defines a trivial sub-braid. 
\end{definition}

\begin{definition}
\medskip
\noindent
Let $\Gamma'=f(\Gamma)$ be the natural injection of a braided diagram $\Gamma$ in $\mathbb{R}^3$. We define the following \emph{moves} on $\Gamma$:

\begin{itemize}

\item[(1)] Consider a split $v_1$ and a merge $v_2$ of $\Gamma$ (and $\Gamma'$).  Suppose that $\Gamma'$ has a sub-braid $h$ on $n$~strands connecting every $i$-th edge of $v_1$ with the $h(i)$-th edge of $v_2$. Also suppose that each strand of $h$ contains a (possibly empty) sequence of white vertices $\{v^{h_1}, \dots, v^{h_{t(i)}}\}$ with $ h_1 \circ \dots \circ h_{t(i)} = h$, where each $t(i)$ depends on the strand.

Take the connected subgraphs $\Gamma_0$ of $\Gamma$ and $\Gamma'_0$ of $\Gamma'$ formed by $v_1$, $v_2$, $h$, the edges starting at $v_1$ and the edges arriving at $v_2$. Replace a neighbourhood of $\Gamma'_0$ by the neighbourhood  of a single strand $\Gamma''_0$ containing a white vertex $v^h$ if $h \neq Id$; or a single strand with no white vertices, if $h = Id$. Let $ \Gamma''$ be the new pitchfork graph. A \emph{$1$-move} replaces $\Gamma$ by a natural projection $p(\Gamma'')$ such that $p(\Gamma''\setminus \Gamma_0'')= \Gamma \setminus \Gamma_0 $. See \autoref{fig:wtypeI}.   



\item[(2)] 

Consider a merge~$v_1$ and a split~$v_2$ of  $\Gamma$ (and $\Gamma'$). Suppose that $v_1$ and $v_2$ are connected by a strand $d$. Let $\{v^{h_1}, \dots, v^{h_{t}}\}$ be a (possibly empty) sequence of white vertices on this strand with $h_1 \circ \dots \circ h_{t} = h$. 

Take the connected subgraphs $\Gamma_0$ of $\Gamma$ and $\Gamma'_0$ of $\Gamma'$ formed by $v_1$, $v_2$ and $d$. Replace~$\Gamma_0'$ with the neighbourhood of the subgraph $\Gamma_0''$ obtained by adding a white vertex~$v^{h}$ to every strand of~$h$. This braid connects the $i$-th edge of~$v_1$ with the $h(i)$-th edge of~$v_2$, for all $i \in \{1,\dots,n\}$. If $h = Id$, then we replace the neighbourhood of $\Gamma_0$ with the neighbourhood of $n$~edges connecting the $i$-th edge of~$v_1$ and the $i$-th edge of~$v_2$, with no crossings between them. Let $ \Gamma''$ be the new pitchfork graph. A \emph{$2$-move} replaces $\Gamma$ by a natural projection $p(\Gamma'')$ such that $p(\Gamma''\setminus \Gamma_0'')= \Gamma \setminus \Gamma_0 $. See \autoref{fig:wtypeI}.



\begin{figure}[h]
\centering
\labellist
\pinlabel $h$ at 0 143
\pinlabel $h$ at 27 143
\pinlabel $h$ at 75 143
\pinlabel $h$ at 105 143
\pinlabel $h$ at 152 143
\pinlabel $h$ at 179 143

\pinlabel $h_1$ at -3 75
\pinlabel $h_1$ at 29 75
\pinlabel $h_2$ at -3 63
\pinlabel $h_2$ at 29 63
\pinlabel $h_n$ at -3 26
\pinlabel $h_n$ at 29 26

\pinlabel $h_1$ at 122 68
\pinlabel $h_2$ at 122 58
\pinlabel $h_n$ at 122 34

\pinlabel $h$ at 75 52
\pinlabel $h$ at 150 52
\pinlabel $h$ at 179 52
\endlabellist
\centering
\includegraphics[scale=1.2]{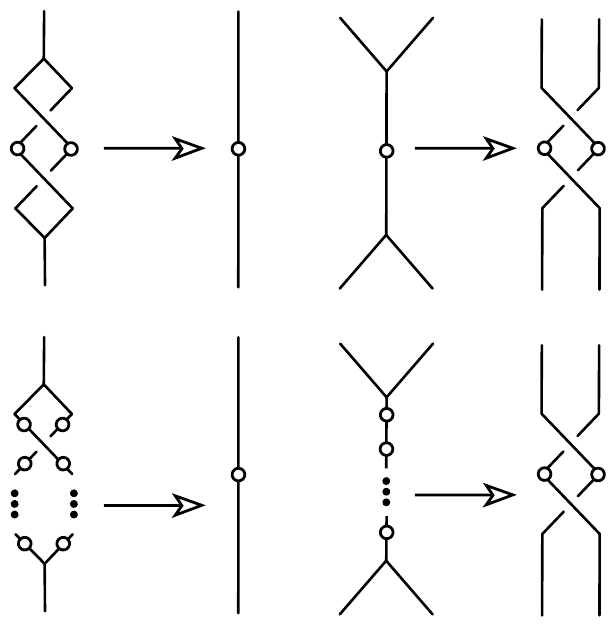}
\caption{Examples of $1$-moves and $2$-moves, where $h_1 \circ h_2 \circ \dots \circ h_n =h$.}
\label{fig:wtypeI}
\end{figure}

\item[(3)] Let $\Gamma_0$ be the subgraph of $\Gamma$ consisting of a crossing between two edges, such that one of them contains a white vertex. Then a \emph{$3$-move} moves the white vertex along the crossing, as \autoref{fig:wtypeII} shows.

\item[(4)]  Let $\Gamma_0$ be the subgraph of $\Gamma$ consisting of one edge whose endpoints are two white vertices~$v^{h_1}$ and~$v^{h_2}$. A \emph{$4$-move} replaces the neighbourhood of~$\Gamma_0$  with the neighbourhood of a strand containing a single white vertex $v^{h_2 \circ h_1}$. If $h_2 \circ h_1 = Id$, it replaces the neighbourhood of~$\Gamma_0$ with a strand with no white vertices. See \autoref{fig:wtypeII}.

\item[(5)] Suppose that in $\Gamma$ we have a crossing between two strands such that one of them ends on a split. Then a \emph{$5$-move} pushes the crossing along the split, such that the strand without the split crosses the $n$ strands of the split as in \autoref{fig_typeII2}. The same occurs for the symmetric case involving two strands and one merge.

\item[(6)] Let $\Gamma_0$ be the neighbourhood of the subgraph of $\Gamma$ consisting of an edge $e$ whose endpoints are a merge and a white vertex $v^h$, where $e$ is the $0$-th edge of the merge. Consider $\Gamma_1$, obtained from $\Gamma_0$ by performing the inverse of a type $1$-move on $e$. Let~$\Gamma_2$ be obtained from~$\Gamma_1$ by performing a $2$-move on the merge and the new split created by the previous reduction. A \emph{$6$-move} replaces $\Gamma_0$ with $\Gamma_2$ (see \autoref{fig_typeII2}). The symmetric case (a split whose $0$-th edge has a white vertex as endpoint) is analogously treated. There are several cases contained here, as the inverse of a $1$-move is not unique.

\end{itemize}

\end{definition}

\begin{figure}[h]
\centering
\labellist

\pinlabel $h$ at -2 107
\pinlabel $h$ at 129 107
\pinlabel $h$ at 90 72
\pinlabel $h$ at 157 72
\pinlabel $h$ at 56 37
\pinlabel $h'$ at 56 16
\pinlabel $h''$ at 135 27
\endlabellist
\centering
\includegraphics[scale=1.2]{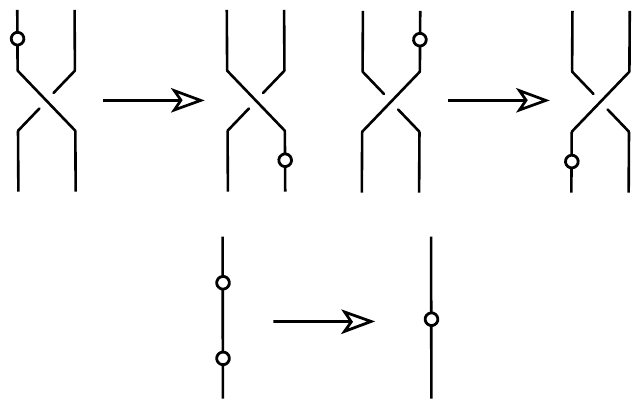}
\caption{Examples of $3$-moves and $4$-moves, where $h'\circ h = h''$.}
\label{fig:wtypeII}
\end{figure}

\begin{figure}[h]
\centering

\labellist
\pinlabel $h$ at 25 41
\pinlabel $h$ at 78 49
\pinlabel $h$ at 106 49

\pinlabel $h$ at 163 39
\pinlabel $h$ at 215 27
\pinlabel $h$ at 243 27

\endlabellist
\includegraphics[scale=1.3]{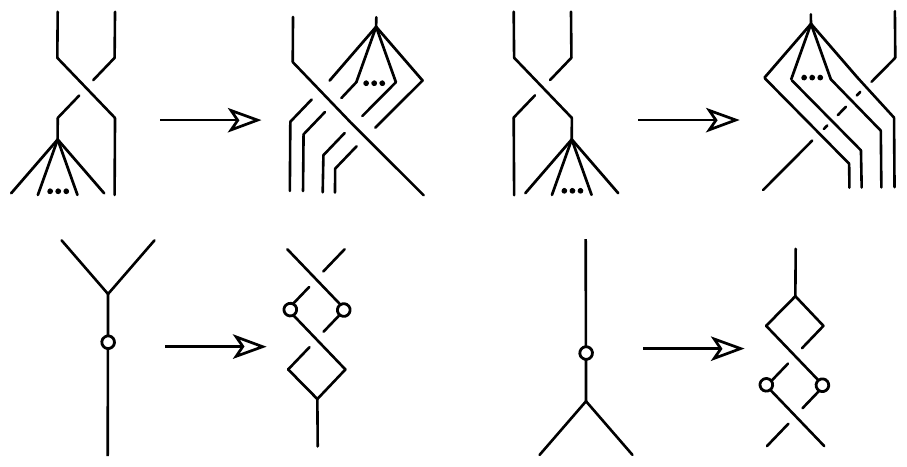}
\caption{Examples of $5$-moves and $6$-moves.}
\label{fig_typeII2}
\end{figure}

\begin{definition}
\label{def:equiv}
For two braided diagrams $\Gamma$, $\Gamma'$, we say that $\Gamma'$ is a \emph{reduction} of $\Gamma$ if there exists a sequence of moves which takes $\Gamma$ to $\Gamma'$. Two braided diagrams are \emph{equivalent} if one is a reduction of the other. 
\end{definition}

\begin{definition}
\label{def:red}
A braided diagram is \emph{reduced} if no moves can be performed on it. 
\end{definition}

We construct a directed graph $R$ from the set of all braided diagrams as follows: the vertex set of $R$ consists of the set of all braided diagrams. The vertices are called \emph{states}. We have an oriented edge from a state $s$ to a state $s'$ if we obtain $s'$ from $s$ by performing a move. The graph $R$ is called a \emph{rewriting system}.

\begin{definition}
A rewriting system $R$ is \textit{terminating} if every oriented path of $R$ has finite length. A state $s$ is \textit{reduced} if no oriented paths start from $s$.
\end{definition}

\noindent
Note that if a rewriting system is terminating, then every state has a reduced form. 

\begin{definition} \label{def:confluence}
We say that a rewriting system is \textit{locally confluent} if for all triples of states $s_0,s_1,s_2$ such that $s_1$ and $s_2$ are reductions of $s_0$, there exists a state $s_3$ which is a reduction of both $s_1$ and $s_2$.
\end{definition}

\begin{proposition}
\label{prop:equiv}
Every braided diagram is equivalent to a unique reduced braided diagram.
\end{proposition}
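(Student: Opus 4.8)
The plan is to view this as a statement about a rewriting system and to invoke Newman's Lemma \citep{Newman}: a rewriting system that is both terminating and locally confluent is confluent, so every state admits a \emph{unique} reduced form. I would therefore reduce the proposition to two properties of the rewriting system $R$ built from braided diagrams: that $R$ is terminating, and that $R$ is locally confluent in the sense of \autoref{def:confluence}. Existence of a reduced diagram equivalent to a given one follows from termination alone, since a terminating system assigns every state at least one reduced form; uniqueness is exactly the content of confluence. Combining existence with confluence yields that every braided diagram is equivalent to exactly one reduced diagram, which is the assertion of the proposition (``equivalent'' being understood as in \autoref{def:equiv}).

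\smallskip

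First I would prove termination by exhibiting a complexity function $c(\Gamma)$, valued in a lexicographically ordered set of tuples, that strictly decreases under every move. The natural primary coordinate is the number of splits plus the number of merges: a $1$-move and a $2$-move each delete a split together with a merge and so strictly decrease it, while the $3$-, $4$-, $5$- and $6$-moves leave it unchanged. The later coordinates must then control those four moves, which only rearrange crossings and white vertices at fixed tree size. A white-vertex count handles the $4$-move, which fuses two white vertices into one; positional measures, recording how far white vertices and crossings lie from their canonical positions in the central braid region between the splits and the merges, handle the $3$-, $5$- and $6$-moves, each of which transports a white vertex or a crossing toward that region. Note that a $5$-move raises the raw number of crossings while lowering this displacement, so the crossing count itself cannot serve as a coordinate. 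The delicate part is to assemble these into a single lexicographic order under which all six moves decrease at once, and then to verify each move against it.

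\smallskip

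Next I would establish local confluence by a critical-pair analysis. Given a state $s_0$ with two moves producing reductions $s_1$ and $s_2$, I must produce a common reduction $s_3$ of $s_1$ and $s_2$. When the two move-sites are disjoint subgraphs the moves commute, and $s_3$ is obtained by applying each move after the other; hence only the finitely many configurations in which the two sites overlap require work. These overlaps are local, sharing a split, a merge, a crossing, or a white vertex, and I would resolve each by sliding white vertices and crossings into a common arrangement, using the far-commutativity and braid relations of $\mathcal{B}_n$ to reconcile the two orderings of crossings that arise.

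\smallskip

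I expect the main obstacle to be this critical-pair analysis, and within it the overlaps produced by the $5$- and $6$-moves. These are the moves that genuinely interact with crossings and with the three-dimensional braid structure, so their confluence rests on the braid relations rather than on bookkeeping; moreover the $6$-move is explicitly a composite of a non-unique inverse $1$-move followed by a $2$-move, which multiplies the number of cases to reconcile. By contrast, designing the termination measure is subtle but essentially combinatorial once the correct lexicographic order is identified, whereas forcing every critical pair to converge is where the geometry of $\mathcal{B}_n$ must be used in earnest.
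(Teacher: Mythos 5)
Your proposal takes essentially the same route as the paper: both reduce the statement to Newman's result by showing the rewriting system is terminating and locally confluent, both establish termination by tracking splits/merges first and then the white-vertex and crossing configurations (the paper phrases this as a layered counting argument, you as a lexicographic measure --- the same idea), and both treat local confluence as a finite check of overlapping move sites, which the paper explicitly leaves as ``a laborious exercise left to the reader.'' If anything, your sketch is more explicit than the paper's proof at both steps, correctly flagging that the $5$- and $6$-move overlaps are where the braid relations must actually be used.
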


\begin{proof}
As it is explained in \citep{Newman}, we only need to show that the rewriting system of braided diagrams is terminating and locally confluent.

\medskip

Firstly, we claim that the rewriting system of a braided diagram is terminating. Let $s$ be a state representing a braided diagram, and consider a path starting from it. This path cannot be infinite as:

\begin{itemize}
\item The number of $1$-moves and $2$-moves in the path is finite, as there is a finite number of splits and merges on $s$. Both $1$-moves and $2$-moves reduce them, whereas the other moves do not increase it. 
\item The number of $5$-moves and $6$-moves in the path is finite, as there is a finite number of crossings followed by splits, merges followed by crossings, merges followed by white vertices or white vertices followed by splits. The other moves do not increase these numbers.
\item Finally, the number of $3$-moves and $4$-moves in the path is finite, as there is a finite number of white vertices and white vertices followed by crossings. In this case, these numbers can be increased by performing $5$-moves and $6$-moves, but they can be performed a finite number of times, as we have seen. 
\end{itemize}   
Note that the previous facts are true because there are not oriented loops on any braided diagram, by definition. Therefore, the process of reducing is finite, so the rewriting system is terminating.

\medskip

On the other hand, it is easy to check the local confluence for all moves, that is, if we perform two different moves to the same braided diagram, there exists a braided diagram which is a reduction of both. For this purpose, it is enough to check all possibilities: given a fixed braided diagram $\Gamma$, let $\Gamma'$ (resp. $\Gamma''$) be the braided diagram obtained from $\Gamma$ by performing one $i$-move (resp. one $j$-move). One needs to prove that there exists a braided diagram $\Gamma'''$ which is a reduction of both $\Gamma'$ and $\Gamma''$, for all $i,j \in \{1, \dots, 6\}$. This is a laborious exercise left to the reader. 
\end{proof}

\begin{corollary}
\label{cor:bijection}
There is a bijection between classes of equivalent braided diagrams and classes of equivalent elements of $BV_n(H)$.
\end{corollary}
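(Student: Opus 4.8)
The plan is to exhibit mutually inverse maps between $BV_n(H)$ and the set of equivalence classes of braided diagrams, using the construction of the previous subsection as the forward direction and the unique reduced form of \autoref{prop:equiv} as the backward one. First I would define $\bar\Phi$ by sending an element $g \in BV_n(H)$, with braided tree-pair representative $(T,\beta,\lambda,T')$, to the equivalence class of the braided diagram $\Gamma_g$ produced by the construction (append the white vertices $v^{h_i}$, realize the braid $\beta$ by crossings, and attach the top and bottom edges at the roots). The first thing to check is that $\bar\Phi$ does not depend on the chosen representative. Two representatives of the same element differ by a finite sequence of expansions and reductions, i.e.\ appending or deleting a final caret that is mapped to another final caret by some $h \in H$. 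On the diagram side, such an expansion replaces a strand by a split, a copy of the sub-braid realizing $h$ carrying the appropriate white vertices, and a merge; this is exactly the configuration collapsed by a $1$-move (and created by its inverse). Hence expansions and reductions of representatives translate into $1$-moves and $2$-moves on diagrams, so $[\Gamma_g]$ depends only on $g$ and $\bar\Phi$ is well defined.

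For the backward direction, I would use \autoref{prop:equiv}: every braided diagram is equivalent to a unique reduced diagram $\widehat\Gamma$. The key step is to show that $\widehat\Gamma$ necessarily has the canonical \emph{tree--braid--tree} shape, so that it lies in the image of the construction. Reading a reduced diagram downward from its main sources, I claim it consists of a forest of splits (the domain forest $T$), then a band containing only white vertices and crossings (encoding $\lambda$ and $\beta$), and finally a forest of merges (the range forest $T'$). This is proved by ruling out every forbidden local pattern using reducedness: a merge followed by a split along a strand would admit a $2$-move (or a $6$-move when a white vertex sits on the relevant $0$-th edge); a split joined to a merge by a sub-braid compatible with its white-vertex labels would admit a $1$-move; a crossing adjacent to a split or merge would be pushed away by a $5$-move; consecutive white vertices on an edge would be amalgamated by a $4$-move; and a white vertex crossed by a strand would be moved by a $3$-move. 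Once all these configurations are excluded, $\widehat\Gamma$ is forced into canonical form, and a braided tree-pair $(T,\beta,\lambda,T')$ can be read off, defining $\Psi([\Gamma])$.

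It then remains to verify that $\bar\Phi$ and $\Psi$ are mutually inverse. Applying the construction to the tree-pair read off from $\widehat\Gamma$ yields a diagram whose reduced form is again $\widehat\Gamma$, so $\bar\Phi \circ \Psi = \mathrm{id}$; conversely, reading off the tree-pair from the reduced form of $\Gamma_g$ returns a representative of $g$, so $\Psi \circ \bar\Phi = \mathrm{id}$, where uniqueness of the reduced diagram in \autoref{prop:equiv} guarantees that $\Psi$ is well defined and hence that inequivalent outputs come from distinct elements. I expect the main obstacle to be the structural characterization of reduced diagrams in the second paragraph: establishing that reducedness excludes \emph{all} forbidden local configurations, and in particular that no merge can precede a split, is the braided analogue of the Belk--Matucci normal-form argument, and it is precisely here that the interplay between the six moves, the crossings, and the white-vertex labels must be controlled carefully.
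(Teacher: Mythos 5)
Your proposal follows essentially the same route as the paper: both reduce the statement to a correspondence between reduced diagrams and reduced elements of $BV_n(H)$ (invoking \autoref{prop:equiv} for existence and uniqueness of reduced forms), and both establish the tree--braid--tree shape of a reduced diagram by ruling out the same forbidden local configurations via the six moves, then reading off the tree-pair. The only slip is that expansions and reductions of tree-pair representatives correspond to $1$-moves and their \emph{inverses} (not $2$-moves, which are used for composition), but this does not affect the argument.
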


\begin{proof}

We only need to prove that there is a bijection between reduced braided diagrams and reduced elements of $BV_n(H)$, as performing reductions and expansions of an element in $BV_n(H)$ corresponds to performing $1$-moves and their inverses on the corresponding braided diagrams. 

\medskip

On the one hand, it is easy to see that a reduced element of $BV_n(H)$ produces a reduced braided diagram when doing the construction described at the beginning of this section. On the other hand, given a reduced braided diagram, we obtain the corresponding reduced element of $BV_n(H)$ by taking into account the following facts: every oriented path from a main source to a main sink has the following form:
$$v_{m_{so}} \rightarrow v_{s_1} \rightarrow \dots \rightarrow v_{s_i}   \rightarrow v^h \rightarrow v_{m_1} \rightarrow \dots \rightarrow v_{m_j} \rightarrow v_{m_{si}}$$
where $v_{m_{so}}$ (resp. $v_{m_{si}}$) is the main source (resp. the main sink), $v_{s_1}, \dots, v_{s_i}$ (resp. $v_{m_1}, \dots, v_{m_j}$) are splits (resp. merges) and $v^h$ is a white vertex. 
\begin{itemize}
\item If there were a merge followed by a split we could perform a $2$-move.
\item If there were a white vertex before a split or after a merge, then we could perform a $6$-move.
\item If there were more than one white vertex on an oriented path, then we could perform a $4$-move.
\item Regarding the crossings, note that all strands must cross after the set of splits and before the set of white vertices. Otherwise, we could perform a $5$-move and a $3$-move respectively.
\end{itemize}
Finally, consider making two cuts on every oriented path of the braided diagram as follows: one cut between the last split and before any crossing of the corresponding strand; and the second cut after the last crossing of the strand and before the white vertex (if exists). The result is a division of the braided diagram into three pieces. The first one is the domain tree $T$, the second one the braid $\beta$ and the third one the range tree $T'$ with a set of labels $\lambda$ on its leaves. That is the tree-pair representation of the element.\end{proof}

Note that this bijection turns out to be a homomorphism of groups: composing two elements of $BV_n(H)$ corresponds to composing their corresponding braided diagrams by appending an edge between the main sink of the former and the main source of the latter. Performing a $1$-move on a braided diagram corresponds to performing a reduction on the corresponding element of $BV_n(H)$. $2$-moves are used in order to compose braided diagrams, and they do not change the equivalence class of the composition, as the reader can check. The same argument can be applied to $3$-moves and $5$-moves. Finally, $4$-moves allow us to compose recursive braids, and $6$-moves do not properly appear on a composition of braided diagrams (they are compositions of inverses of $1$-moves with $2$-moves), but they are needed in order to obtain locally confluence on the rewriting systems.

\medskip

Therefore, we have that $\widehat{BV_n}(H)$ and $BV_{n,r}(H)$ are groups for every $n,r \geq 2$ and $H \leq \mathcal{B}_n$, as the definitions of moves and all proofs given in this section do not depend on the number of main sources or main sinks of the braided diagrams.

\section{Finite generation}\label{sec:fg}

In this section we show that the groups $BV_n(H)$ are finitely generated for every $n \geq 2$ and finitely generated $H \leq \mathcal{B}_n$. The proof is inspired by the one of \citep[Chapter~4]{Higman} for $V_n$, based on the depth of the elements; and it is different from the one used in \citep{Brin1,Brin2} for the braided Thompson group $BV_2$. 

\medskip

The forthcoming proofs can seem very technical at a first sight, but they are very intuitive when one draws the corresponding braided diagrams. We have pictured the diagrams with the details that we believe can be more difficult to understand by only reading. However, before going ahead, we advise to have in mind a clear picture of how the braided diagram of a tree-pair looks like.

\begin{definition}
We say that a tree $T$ has \emph{depth} $d \geq 1$ if it contains exactly $d$ different carets. The \emph{depth} of an element $v=(T,\beta,\lambda, T') \in {BV_n}(H)$ is the depth of both $T$ and $T'$.

\end{definition}

 For simplicity, we will sometimes refer to an element $v \in BV_n(H)$ as one of its tree-pair representations $(T,\beta, \lambda, T')$. We want to prove the following theorem. 

\begin{theorem}\label{theorem:fg}
Let $R$ be the $n$-ary tree of depth $1$ and let $R':=R[1]$ if $n=2$ and $R':=R[2]$ otherwise. Then $BV_n(\mathcal{B}_n)$, for $n>1$ can be generated using the $2n$ following elements:

\begin{itemize}

\item The $n$ generators of $F_n$ \citep{Brown}, that is, the elements $\left(R[n-1],Id,\overrightarrow{Id}, R[i]\right)$, for $i=0,\dots, n-2$ and $\left(R[n-1][(n-1)(n-1)], Id,\overrightarrow{Id}, R[n-1][(n-1)0]\right)$,

\item the element $\left(R',\sigma_{2n-2},\overrightarrow{Id} ,R'\right)$, and

\item the $n-1$ elements $\left(R,Id, \{\sigma_j, Id, \dots, Id\},R\right)$, for $j=1,\dots, n-1$.

\end{itemize}
\end{theorem}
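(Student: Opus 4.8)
The plan is to mimic Higman's depth-based argument, decomposing a general element into three independent kinds of data --- the underlying pair of trees, the braid, and the recursive-braid labels --- and generating each kind separately. First I would use the canonical form of a reduced braided diagram from \autoref{cor:bijection} (main source, splits, crossings, white vertices, merges, main sink) to factor any $v=(T,\beta,\lambda,T')$ as a product of an element of $F_n$ (the pure tree rearrangement $(T,Id,\overrightarrow{Id},T')$), a \emph{pure braid element} $(S,\beta,\overrightarrow{Id},S)$, and a \emph{pure label element} $(S,Id,\lambda,S)$ on a common tree $S$. It therefore suffices to show that each of these three families lies in the subgroup $G$ generated by the $2n$ listed elements. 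The $F_n$ factor is immediate: by \citep{Brown} the $n$ listed elements are exactly the standard generators of $F_n$, so every $(T,Id,\overrightarrow{Id},T')$ lies in $G$.

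For the braid factor, since the composition of braided diagrams multiplies braids and $\mathcal{B}_m=\langle\sigma_1,\dots,\sigma_{m-1}\rangle$, it is enough to produce, for every number of strands $m$ and every $i$, the elementary element $(S,\sigma_i,\overrightarrow{Id},S)$ consisting of a single crossing of the $i$-th and $(i+1)$-st strands. I would obtain these from the single generator $X=(R',\sigma_{2n-2},\overrightarrow{Id},R')$ by conjugating and expanding: appending final carets to the strands of $X$ not involved in the crossing (an operation realized by $F_n$-elements together with $1$-moves) yields a single crossing among arbitrarily many strands, and conjugating by suitable $F_n$-elements transports it to an arbitrary adjacent position. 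This repositioning is carried out by induction on the depth of $S$, using the $F_n$-generators to peel off final carets and reduce to the fixed configuration of $R'$, exactly as Higman reduces the depth of an element of $V_n$.

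For the label factor, by repeated $4$-moves the pure label element $(S,Id,\lambda,S)$ with $\lambda=\{h_1,\dots,h_m\}$ splits as a product of single-leaf elements carrying one label $h_k\in\mathcal{B}_n$ on the $k$-th leaf. Writing each $h_k$ as a word in $\sigma_1,\dots,\sigma_{n-1}$ further reduces to single-leaf elements whose label is a single Artin generator $\sigma_j$. Each of these is a conjugate $g^{-1}Y_j g$ of the generator $Y_j=(R,Id,\{\sigma_j,Id,\dots,Id\},R)$ by an element $g\in G$ that carries the $k$-th leaf of $S$ to the first leaf of $R$; the existence of such a $g$ inside $G$ follows by the same depth induction, now applied to move a white vertex up to the top level.

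The main obstacle is precisely this repositioning step, common to the braid and the label parts: the available generators only create a crossing (respectively a label) in one fixed location, and one must show that the order-preserving elements of $F_n$, together with the crossings already built, suffice to conjugate it onto every adjacent pair of strands and every leaf at every depth, all while staying inside the generated subgroup. Making this precise requires a careful induction on depth, together with bookkeeping of how expansions of a crossing interact with the recursive (cabled) braids and of how the permutation underlying $\beta$ reindexes the labels in the initial decomposition; these are the genuinely technical points, whereas the reduction to the three elementary families and the $F_n$ step are routine given \autoref{cor:bijection}.
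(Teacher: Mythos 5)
Your three-factor decomposition into an $F_n$ part, a pure braid part and a pure label part is fine (it is essentially the paper's starting point), and your treatment of the $F_n$ factor and of the label factor mirrors what the paper does. The fatal problem is in the braid factor, at exactly the step you flag as ``the main obstacle'' and then dismiss: the claim that conjugation by $F_n$-elements ``transports [the crossing] to an arbitrary adjacent position'' is false. Conjugating $\left(S,\sigma_i,\overrightarrow{Id},S\right)$ by $\left(S',Id,\overrightarrow{Id},S\right)\in F_n$ gives $\left(S',\sigma_i,\overrightarrow{Id},S'\right)$: the tree changes, but the braid word --- hence the position of the crossing --- does not. The only other operation your plan allows, expansion/reduction of representatives ($1$-moves and their inverses), either shifts the index of the crossing by a multiple of $n-1$ (expanding a strand to the left of the crossing turns $\sigma_i$ into $\sigma_{i+n-1}$), or destroys the single crossing altogether by replacing it with a cabled block crossing (if the expanded strand is one of the two crossing strands). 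Since the crossing of $X=\left(R',\sigma_{2n-2},\overrightarrow{Id},R'\right)$ involves the two \emph{rightmost} strands of $R'$, every single crossing reachable from $X$ by your moves is again a crossing of the two rightmost strands (and in particular has index $\equiv 0 \pmod{n-1}$). You can therefore never produce, say, $\left(S,\sigma_1,\overrightarrow{Id},S\right)$ on a tree with three or more final positions to the right of the crossing, yet such crossings are indispensable for generating the braid groups $\mathcal{B}_m$. So the braid part of your plan cannot be completed as described.

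This is not an incidental difficulty; it is the crux of the theorem, and the paper's resolution uses an ingredient your proposal never brings into the braid part: the recursive-braid generators $g_j=\left(R,Id,\{\sigma_j,Id,\dots,Id\},R\right)$. Because labels are \emph{recursive}, expanding the labelled leaf of $g_j$ spawns an honest crossing $\sigma_j$ between the $n$ new strands, together with new labels $\sigma_j$ on each of them; this is the equivalence $g_i\equiv v_i=\left(T(n),\sigma_i,\lambda_i,T(n)\right)$ that opens the proof of \autoref{lemma1:fg}, and the bulk of that lemma is a delicate induction that strips off the residual labels so as to isolate the bare crossings $h_1,\dots,h_{n-1}$. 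Only after these low-index crossings are available does an argument in the spirit of yours (\autoref{lemma2:fg}) shift indices by multiples of $n-1$, using $F_n$-elements and $h_{m(n)-1}$, to reach all remaining $h_i$. Tellingly, when $H$ is trivial (so no $g_j$'s exist), the paper does \emph{not} claim that $F_n$ and one crossing suffice: in Section~4.1.1 it keeps $h_1,\dots,h_{n-1}$ as additional generators of $BV_n$. The missing idea in your proposal is precisely that the label generators must be used to manufacture the crossings in positions $1,\dots,n-1$; without it, the repositioning step you defer to ``a careful induction on depth'' has no mechanism to run on.
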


\begin{figure}[h]
\labellist

\pinlabel $\sigma_1$ at 480 55
\pinlabel $\sigma_2$ at 586 55

\endlabellist
\centering
\includegraphics[scale=0.65]{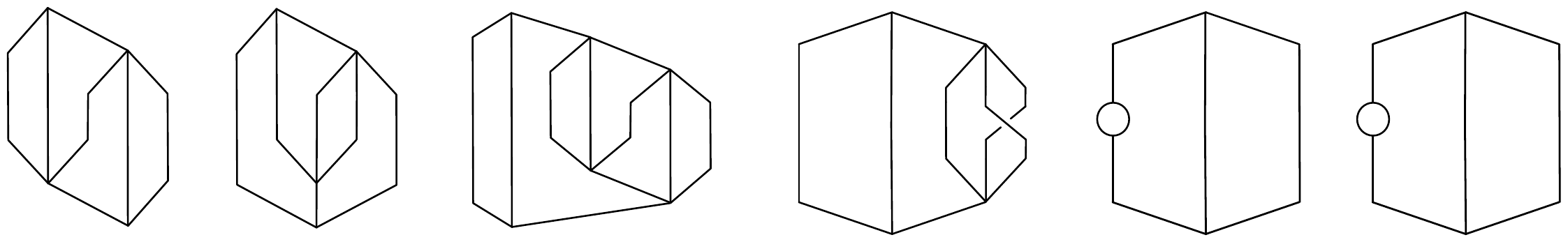}
\caption{The six generators of $BV_3(\mathcal{B}_3)$.}
\label{fig:generadores_teorema}
\end{figure}

\noindent
We depict in \autoref{fig:generadores_teorema} the six generators of $BV_3(\mathcal{B}_3)$. As we will see in Section 4.1.1, we can adapt this theorem for each $H \leq \mathcal{B}_n$.

\subsection{Proof of \autoref{theorem:fg}}

Our strategy uses a special type of braid, called ribbon. The following definition is a particular case of the ribbons defined in \citep{FRZ}. 
We say that a braid~$\beta$ is \emph{positive} if it can be written by using only positive powers of the Artin generators. Moreover, we say that a braid~$\beta$ is \emph{simple} if it is positive and every pair of strands crosses at most once. 

\begin{definition}
Consider $T$ and $T'$ to be two finite full $n$-ary trees with the same number~$l$ of leaves. Let $c$ and $c'$ be final carets of~$T$ and $T'$ respectively. Let $R$ be the set of all simple braids $\beta\in \mathcal{B}_l$ such that the natural injection in $\mathbb{R}^3$ of the braided diagram of $(T,\beta,T')$ contains a trivial sub-braid connecting the leaves of~$c$ to the leaves of~$c'$.
We define the \emph{ribbon $r$ connecting $c$ to $c'$} as the braid in $R$ having minimal length (as a word with respect to the positive Artin generators).

%
%
%
%
%
%

\end{definition}

\noindent
We include here \autoref{fig_ribbon2} in order to help the reader. A ribbon is called like that because the neighbourhood of the trivial sub-braid connecting the carets can be seen as a `ribbon' or a `tube' that the other strands cannot touch. This allows us to reduce a braided diagram associated to $(T,\beta,\overrightarrow{Id}, T')$ by performing a 1-move, as shown in \autoref{sec:3}. 

\begin{figure}[h]
\labellist
\pinlabel $c$ at 85 150
\pinlabel $c'$ at 188 40
\endlabellist
\centering
\includegraphics[scale=0.7]{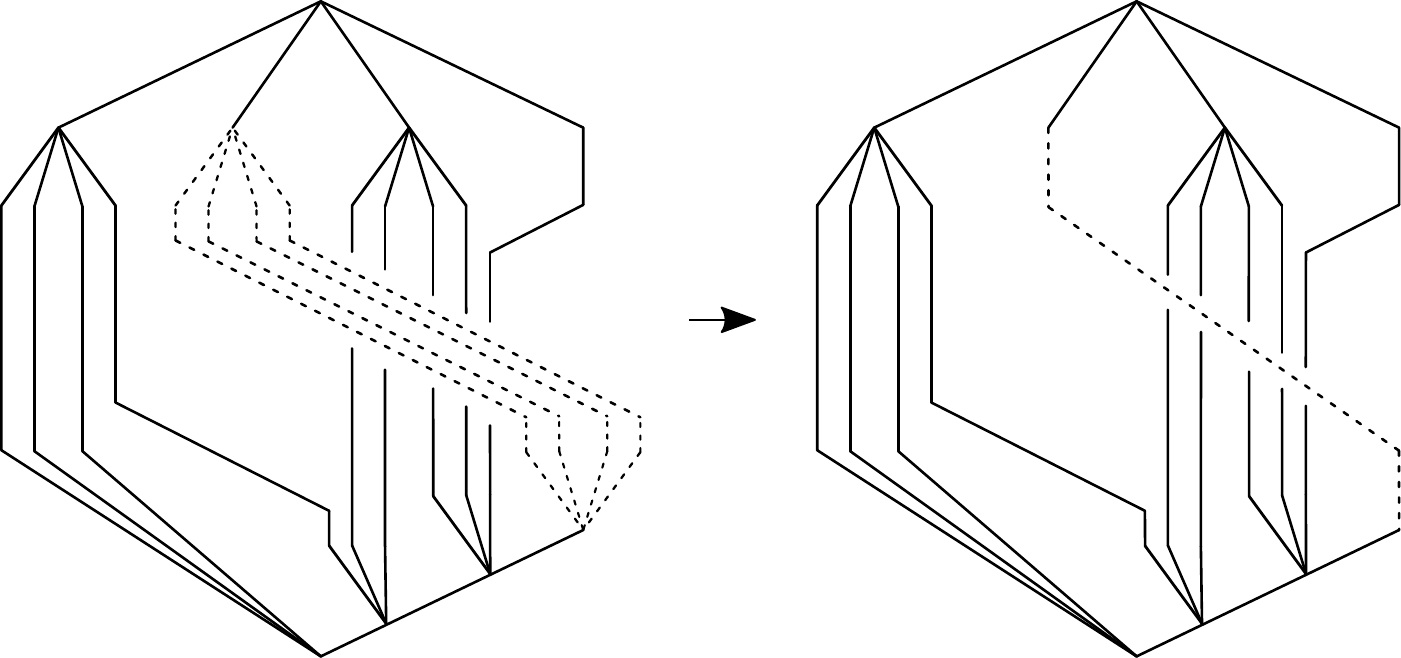}
\caption{A diagram containing a ribbon connecting $c$ to $c'$, and its reduction.}
\label{fig_ribbon2}
\end{figure}

\begin{definition} Let $T$ and~$T'$ be two full finite $n$-ary trees. The \emph{least common multiple} of~$T$ and~$T'$ is the minimal full finite $n$-ary tree~$T''$ with respect of expansions such that $T \subseteq T''$ and $T' \subseteq T''$ as rooted full finite $n$-ary trees. See \autoref{fig:leastcommonmultiple}.
\end{definition}

\begin{figure}[h]
\centering
\includegraphics[scale=1]{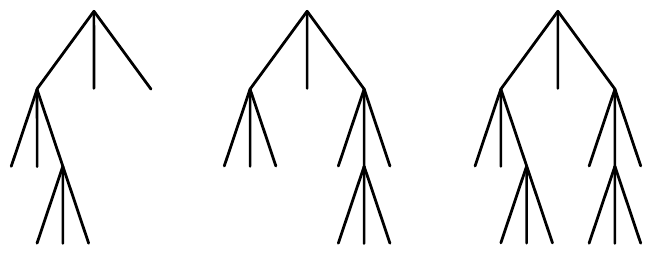}
\caption{The third tree is the least common multiple of the first two.}
\label{fig:leastcommonmultiple}
\end{figure}

Let~$R$ be the tree of depth~$1$. Define the \emph{`spine' tree $S_d$} of depth~$d$ as the full finite $n$-ary tree constructed inductively from~$R$ by expanding its last leaf, that is:
$$S_d = R[n-1][(n-1)(n-1)] \dots [\overbrace{(n-1) \dots (n-1)}^{d-1}].$$

\noindent
The proof of the following proposition gives a way to decompose elements of $BV_n(H)$.

\begin{proposition}\label{prop:fg}
Let $v=(T_1,\beta,\lambda, T_2)$ be an element of ${BV_n}(H)$ such that $T_1$ (and $T_2$) has depth $d > 4$. Then we can express $v$ as a product of elements in ${BV_n}(\mathcal{B}_n)$ having depth less than $d$.  Let $T_3$ be a tree of depth $d$ with exactly $3$ final carets. Then, each factor $(T'_1,\beta',\lambda',T'_2)$ satisfies one of the following properties:

\begin{itemize}

\item $T'_2=T_3$, $\beta'$ is a ribbon and $\lambda'= \overrightarrow{Id}$.

\item $T'_1=T_3$, $\beta'$ is a ribbon and $\lambda'=\overrightarrow{Id}$.

\item $T'_1=T'_2=T_3$, $\beta'=\sigma_i^{\pm 1}$, where $\sigma_i$ is an Artin generator and $\lambda'=\overrightarrow{Id}$.

\item $T'_1=T'_2$ has depth $1$, $\beta'$ is trivial and $\lambda'=\{\sigma_i^{\pm 1},Id,\dots, Id\}$, where $\sigma_i$ is an Artin generator.

\end{itemize}

\end{proposition}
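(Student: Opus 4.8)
The plan is to split $v$ into a braided part and a recursive part, and then to route the braided part through the three-caret reference tree $T_3$, turning the global braid into a string of single crossings flanked by two ribbons, while the recursive part collapses to depth~$1$.

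First, since $T_1$ and $T_2$ both have depth $d$ they share the same number of leaves $l=1+(n-1)d$, so all braids below live in $\mathcal{B}_l$ and composition of tree pairs over a common tree simply multiplies braids. Using the composition rule I would write
\[
v=(T_1,\beta,\overrightarrow{Id},T_2)\cdot(T_2,Id,\lambda,T_2),
\]
peeling the recursive labels off onto the range side (the first factor has identity labels, so no permutation by $\beta$ occurs and the product reconstructs $v$). For the second factor I would expand each label $h_i\in\mathcal{B}_n$ as a word in the Artin generators $\sigma_j^{\pm1}$ and split $(T_2,Id,\lambda,T_2)$ into a product of elements carrying a single recursive generator on one strand and the identity on the others. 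Such an element is highly non-reduced: every final caret of $T_2$ away from that strand is mapped identically to itself, so repeated $1$-moves strip $T_2$ down to the root caret, and what remains is a depth-$1$ element with a single recursive $\sigma_j^{\pm1}$, exactly a factor of the fourth type.

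Next I would handle $(T_1,\beta,\overrightarrow{Id},T_2)$. Fix a final caret of $T_1$ and one of $T_3$ and let $r_1$ be the ribbon connecting them; do the same for $T_3$ and $T_2$ to get $r_2$. By the very definition of a ribbon (see \autoref{fig_ribbon2}), the diagrams of $(T_1,r_1,\overrightarrow{Id},T_3)$ and $(T_3,r_2,\overrightarrow{Id},T_2)$ each contain a trivial sub-braid joining the chosen carets, so a single $1$-move removes that caret pair and drops the depth to $d-1$; these are the factors of the first and second type. Writing $\gamma:=r_1^{-1}\beta r_2^{-1}\in\mathcal{B}_l$, the composition rule gives
\[
(T_1,\beta,\overrightarrow{Id},T_2)=(T_1,r_1,\overrightarrow{Id},T_3)\cdot(T_3,\gamma,\overrightarrow{Id},T_3)\cdot(T_3,r_2,\overrightarrow{Id},T_2).
\]
Finally I would expand $\gamma$ as a word $\sigma_{i_1}^{\epsilon_1}\cdots\sigma_{i_k}^{\epsilon_k}$ in the Artin generators of $\mathcal{B}_l$, so that $(T_3,\gamma,\overrightarrow{Id},T_3)$ becomes a product of single-crossing elements $(T_3,\sigma_{i_m}^{\epsilon_m},\overrightarrow{Id},T_3)$, each a factor of the third type. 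Each such factor reduces below depth $d$ because a single crossing disturbs only two adjacent strands, whereas $T_3$ has three final carets; hence at least one final caret is untouched, maps identically, and can be removed by a $1$-move.

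The main obstacle is the bookkeeping that makes the middle product telescope correctly while keeping every factor reducible. Concretely, I must choose the ribbons $r_1,r_2$ and the Artin word for $\gamma$ so that each ribbon element genuinely carries a trivial sub-braid connecting the prescribed carets, and so that the single-generator recursive factors land in the normalized depth-$1$ form of the fourth type after accounting for the strand on which the generator sits. The hypothesis $d>4$ is used precisely here, to guarantee that a tree $T_3$ of depth $d$ with exactly three final carets, together with ribbons to arbitrary depth-$d$ trees, exists with enough room. Both verifications are routine once the corresponding braided diagrams are drawn, in the spirit of \autoref{prop:equiv}, but they are the genuine content of the argument.
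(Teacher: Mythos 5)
Your handling of the braid part is essentially the paper's own argument: factor $(T_1,\beta,\overrightarrow{Id},T_2)$ through $T_3$ via two ribbons as $(T_1,r_1,\overrightarrow{Id},T_3)\,(T_3,r_1^{-1}\beta r_2^{-1},\overrightarrow{Id},T_3)\,(T_3,r_2,\overrightarrow{Id},T_2)$, reduce each ribbon factor by a $1$-move, and expand the middle braid into single Artin crossings, each reducible because a crossing of two adjacent strands misses at least one of the three final carets of $T_3$. That part is correct.

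The gap is in the recursive-label part. You assert that the factor $(T_2,Id,\lambda_{i,j},T_2)$, carrying a single $\sigma_i^{\pm 1}$ on strand $j$ and identities elsewhere, can be stripped by repeated $1$-moves ``down to the root caret,'' yielding a depth-$1$ factor of the fourth type. This is false. A $1$-move deleting a final caret requires the white vertices on \emph{each} strand of the connecting sub-braid to multiply to that sub-braid; for the final caret containing the labelled leaf the connecting sub-braid is trivial while the labelled strand carries $\sigma_i^{\pm 1}\neq Id$, so that caret can never be removed, and consequently neither can any caret on the path from the root to the labelled leaf. After removing all final carets disjoint from that path, one is left with $(T^{(j)},Id,\cdot\,,T^{(j)})$, where $T^{(j)}$ is the minimal full $n$-ary tree having the labelled leaf as a leaf; this diagram admits no further moves, so by the uniqueness of reduced forms (\autoref{prop:equiv}) it is \emph{the} reduced representative. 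Its depth equals the level of the labelled leaf in $T_2$, which can be as large as $d$. So your fourth-type factors are in general not of depth $1$, nor even of depth less than $d$: the decomposition fails exactly where the statement needs it.

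The missing idea is the paper's conjugation trick. Take $T_4=S_d$ the spine tree of depth $d$ (whose first leaf already has level $1$), let $T_5$ be the depth-$1$ tree, and let $\gamma$ be any braid whose underlying permutation sends $j$ to $1$. Then $(T_2,Id,\lambda_{i,j},T_2)\equiv(T_2,\gamma,\overrightarrow{Id},T_4)\,(T_5,Id,\lambda_{i,1},T_5)\,(T_4,\gamma^{-1},\overrightarrow{Id},T_2)$: the middle factor is now genuinely of the fourth type (expanding $T_5$ to $T_4$ only ever expands the last leaf, which carries the identity label, so the label stays on the first strand of a depth-$1$ tree), and the two conjugating factors have identity labels, hence are handled by your correct ribbon-and-crossing argument. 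Installing the labels of $\lambda$ one Artin generator at a time through this conjugation, on top of your treatment of $(T_1,\beta,\overrightarrow{Id},T_2)$, recovers the paper's proof.
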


\noindent
The moral of the proof, as well as in \citep{Higman}, is to decompose any element $v \in BV_n(H)$ of depth $d \geq 4$ into a product of elements which have less depth than $v$. We firstly do it for elements of $BV_n$, and then we deal recursive braids.

\begin{proof}
Let $c_1$ be a final caret of~$T_1$ and let $c_2$ be a final caret of~$T_2$. Let also $c_3$ and $c_4$ be two different final carets of $T_3$. Consider the ribbon~$r_1$ connecting $c_1$ to $c_3$ and  the ribbon~$r_2$ connecting~$c_4$ to~$c_2$. Then
\begin{align*}
\left(T_1,\beta, \overrightarrow{Id}, T_2\right) & \equiv \left(T_1, r_1 r_1^{-1} \beta r_2^{-1} r_2, \overrightarrow{Id}, T_2\right)\\ & \equiv \left(T_1,r_1,\overrightarrow{Id}, T_3\right) \left(T_3,r_1^{-1}\beta r_2^{-1},\overrightarrow{Id}, T_3\right)\left(T_3,r_2, \overrightarrow{Id}, T_2\right),
\end{align*}
where $\equiv$ means that all tuples (or compositions of tuples) represent the same element of $BV_n(H)$.
The diagram constructed from $(T_1,r_1,\overrightarrow{Id}, T_3)$ is not reduced, because there is a ribbon $r_1$ connecting $c_1$ to $c_3$ and one can perform a $1$-move. Hence, the corresponding element has depth less than $d$. Using the same argument, the element represented by  $(T_3,Id,r_2, T_2)$ has depth less than $d$. Also notice that $(T_3,r_1^{-1}\beta r_2^{-1},\overrightarrow{Id}, T_3)$ is equivalent to a product of elements of the form $(T_3, \sigma_i^{\pm 1},\overrightarrow{Id}, T_3)$.  Since~$T_3$ has 3 final carets and $\sigma_i^{\pm 1}$ is a crossing of two consecutive strands, we can always reduce its diagram. This means that it has depth less than~$d$.

\medskip
\noindent
Let $T_4$ be the spine tree~$S_d$. Let also~$T_5$ be the full finite $n$-ary tree of depth~$1$ and notice that the least common multiple of $T_4$ and~$T_5$ is~$T_4$, as $T_5 \subset T_4$. Suppose that we have $(T_1,\beta,\lambda_1, T_2)$, where $\lambda_1 = \{\ell_1,\dots,\ell_j,\dots,\ell_m\}$. We show that is possible to obtain an element $(T_1,\beta,\lambda'_1, T_2)$ from the previous one such that $\lambda'_1 = \{\ell_1,\dots,\ell_j\sigma_i^{\pm 1},\dots,\ell_m\}$ for some $\sigma_i$. Let $\gamma$ be any braid performing the permutation $\gamma(j)=1$, and $\lambda_{i,j}=\{\ell_k\}_{k=1}^m$ such that $\ell_j=\sigma_i^{\pm 1}$ and $\ell_k=Id$ if $k\neq j$.  Then, we have
\begin{align*}
\left(T_1,\beta,\lambda_1, T_2\right) \left(T_2, \gamma, \overrightarrow{Id},  T_4\right) \left(T_5, Id, \lambda_{i,1}, T_5\right) \left(T_4, \gamma^{-1}, \overrightarrow{Id}, T_2 \right) & \equiv \left(T_1,\beta,\lambda_1, T_2\right) \left(T_2, Id, \lambda_{i,j}, T_2 \right) \\ & \equiv \left(T_1,\beta,\lambda_2, T_2\right). 
\end{align*}
Since we have already shown that we can express $(T_2, \gamma^{\pm 1}, \overrightarrow{Id}, T_4)$ using the desired generators, this finishes the proof.
\end{proof}

\medskip
\noindent
Note that the previous proof works for depth $d \geq 5$ because $5$ is the minimal depth that $T_3$ can have for $n =2$. Also notice that the previous proposition can be refined to obtain a set of generators for $BV_n(H)$ if $H$ is finitely generated, by using recursive braids (labels) that correspond to the generators of $H$.

\medskip

Let $T$ be the depth $1$ tree. We define

$$T(n) = \begin{cases}
T[0][1][00][01], &  \mbox{if} \ n = 2, \\
T[0][1][2], & \mbox{if} \ n > 2. \\
\end{cases}$$

Let $m(n)$ be the number of leaves of~$T(n)$.

\begin{corollary}\label{cor:fg}
For every $n \geq 2$, $BV_n(\mathcal{B}_n)$ is finitely generated. In addition, every generator has depth at most 5 (4 if $n\geq 3$), and has a tree pair representative $(T,\beta, \lambda, T')$ satisfying one of the following conditions.

\begin{itemize}

\item $T'=T(n)$, $\beta=Id$ and $\lambda=\overrightarrow{Id}$. This generator is denoted by $e_T$, regarding the domain tree $T$ of $(T,\beta, \lambda, T')$.

\item $T=T'=T(n)$, $\beta=\sigma_i$, where $\sigma_i$ is an Artin generator of $\mathcal{B}_{m(n)}$ and $\lambda=\overrightarrow{Id}$. This generator is denoted by $h_i$.

\item $T=T'$ has depth $1$, $\beta$ is trivial and $\lambda=\{\sigma_j, Id,\dots, Id\}$, where $\sigma_j$ is an Artin generator of~$\mathcal{B}_n$. This generator is denoted by $g_j$.

\end{itemize}

\end{corollary}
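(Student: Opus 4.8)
The plan is to obtain Corollary \ref{cor:fg} directly from Proposition \ref{prop:fg} by induction on depth, using the three families $e_T$, $h_i$, $g_j$ as the terminal stock of generators. Throughout I set $B=5$ if $n=2$ and $B=4$ if $n\ge 3$, so that $B$ is the depth of $T(n)$ and also, by the remark after Proposition \ref{prop:fg}, the minimal depth of a tree with three final carets.

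First I would run the induction. Given $v=(T_1,\beta,\lambda,T_2)$ of depth $d$, if $d$ exceeds $B$ then Proposition \ref{prop:fg} rewrites $v$ as a product of factors of strictly smaller depth, each a ribbon factor $(T_1,r,\overrightarrow{Id},T_3)$ or $(T_3,r,\overrightarrow{Id},T_2)$, an Artin factor $(T_3,\sigma_i^{\pm1},\overrightarrow{Id},T_3)$, or a label factor whose trees have depth one. Recursing on the factors, in finitely many steps every element of $BV_n(\mathcal{B}_n)$ becomes a product of elements of depth at most $B$, so it only remains to express the elements of depth at most $B$ in terms of $e_T$, $h_i$, $g_j$.

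The arithmetic fact that makes everything fit is that a full $n$-ary tree with $d$ carets has exactly $1+d(n-1)$ leaves; in particular a tree has depth $B$ if and only if it has $m(n)$ leaves. Hence I would take an element of depth at most $B$ and expand both of its trees, cabling $\beta$ and duplicating the labels of $\lambda$ accordingly, to reach a representative $(T_1',\beta',\lambda',T_2')$ whose trees have depth exactly $B$, and therefore exactly $m(n)$ leaves, the number of leaves of $T(n)$. Using the identity $e_Te_{T'}^{-1}=(T,Id,\overrightarrow{Id},T')$, I can then write
\[
(T_1',\beta',\lambda',T_2')=e_{T_1'}\,\bigl(T(n),\beta',\lambda',T(n)\bigr)\,e_{T_2'}^{-1},
\]
so that it suffices to generate the middle factor, which lives entirely on the fixed tree $T(n)$. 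Splitting this factor into its braid and label parts, the braid $\beta'\in\mathcal{B}_{m(n)}$ becomes a word in the $h_i^{\pm1}$, and, handling the pure $BV_n$ part before the labels exactly as in the proof of Proposition \ref{prop:fg}, each label is produced from the $g_j^{\pm1}$ by the same conjugation manoeuvre used there (conjugating a single label on a depth-one tree by a braid that carries it to the desired leaf). Finiteness is then immediate: there are finitely many trees of depth $B$, hence finitely many $e_T$; the $h_i$ are the finitely many Artin generators of $\mathcal{B}_{m(n)}$; and there are $n-1$ elements $g_j$.

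I expect the only genuine obstacle to be the matching of strand counts, which is precisely why $T(n)$ and $m(n)$ are chosen as they are. The generators $h_i$ live in $\mathcal{B}_{m(n)}$, so in order to realize an arbitrary crossing or ribbon on a bounded-depth tree as a conjugate $e_T\,h_i^{\pm1}\,e_T^{-1}$, that tree must have exactly $m(n)$ leaves; the leaf-count identity above is exactly what guarantees this for every depth-$B$ tree, and in particular for the minimal three-final-caret tree $T_3$ produced by Proposition \ref{prop:fg}. Once this alignment is in place, the remaining steps, namely the expansion-and-cabling normalisation and the label conjugations, are routine bookkeeping.
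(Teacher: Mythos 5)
Your proposal is correct and follows essentially the same route as the paper: induction on depth via Proposition~\ref{prop:fg}, followed by normalising the bounded-depth elements onto the fixed tree $T(n)$ through the factorisation $(T_1',\beta',\lambda',T_2')=e_{T_1'}\,\bigl(T(n),\beta',\lambda',T(n)\bigr)\,e_{T_2'}^{-1}$, writing the braid part as a word in the $h_i^{\pm 1}$ and producing the labels by the $g_j$-conjugation manoeuvre from the proposition's proof. The only cosmetic difference is that the paper phrases the endgame as re-applying Proposition~\ref{prop:fg} with $T_3$ replaced by $T(n)$ and then trading the ribbon factors $(T,\beta,\overrightarrow{Id},T(n))$ for $e_T$ via the very same sandwich identity, whereas you bypass the ribbon stage in the base case using the leaf-count arithmetic; the substance is identical.
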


\begin{proof}

As one can always perform the inverse of a $1$-move to a diagram, we can always apply \autoref{prop:fg} (performing the inverse of a $1$-move corresponds to expanding the domain and the range trees of the corresponding tree pair). Then, by applying induction on the depth of elements, it follows that we can use as generators the elements described in \autoref{prop:fg} by replacing $T_3$ with $T(n)$. All these elements have depth less than~4 (or 3 if $n\geq 3$). 
\medskip

\noindent
Notice that $(T(n),\sigma_i,\overrightarrow{Id},T(n))$ is the inverse of $(T(n),\sigma_i^{-1},\overrightarrow{Id},T(n))$, so we can consider only the first one as a generator. Analogously, we can discard the $(T,Id,\lambda,T)$ containing $\lambda=\{\sigma_j^{-1},Id,\dots, Id\}$. On the other hand, if $\beta$ is a ribbon, we have that \[\left(T,\beta, \overrightarrow{Id}, T(n) \right)\equiv \left(T,Id, \overrightarrow{Id}, T(n)\right) \left(T(n), \beta,\overrightarrow{Id}, T(n)\right).\]
Observe that $(T(n), \beta,\overrightarrow{Id}, T(n))$ can be written as a product of elements with representatives $(T(n),\sigma_i^{\pm 1},\overrightarrow{Id},T(n))$. Then use $(T,Id,\overrightarrow{Id}, T(n))$ as a generator and get rid of $(T,\beta,\overrightarrow{Id}, T(n) )$.
Similarly, we can replace the generators of the form $(T(n),\beta,\overrightarrow{Id}, T)$ by $(T(n),Id,\overrightarrow{Id}, T)$. Finally, notice that $(T(n),Id,\overrightarrow{Id}, T)$ is the inverse of $(T,Id,\overrightarrow{Id}, T(n))$, hence we can discard it.
\end{proof}

\begin{remark}\label{remark:gen}
Notice that the set of generators $\{ e_T, 
h_1, \dots, h_{m(n)-1} \}$, for all trees $T$ of depth less than $5$ (or $4$ if $n>2$), generate all elements in $BV_n$. Also, if $S_H$ is a generating set for $H\leq \mathcal{B}_n$ and $T$ is the $n$-ary tree of depth 1, we can substitute the set of generators $g_j$ by the set of generators $(T,Id, \{s, Id,\dots ,Id \}, T)$ for every $s \in S_H$.
\end{remark}

\begin{figure}[p!]
\centering
\labellist
\pinlabel $(1)$ at 28 100
\pinlabel $(2)$ at 155 0
\pinlabel $(3)$ at 250 30

\pinlabel $(1)$ at 348 500
\pinlabel $(2)$ at 463 523
\pinlabel $(3)$ at 558 555
\endlabellist
\includegraphics[scale=0.8]{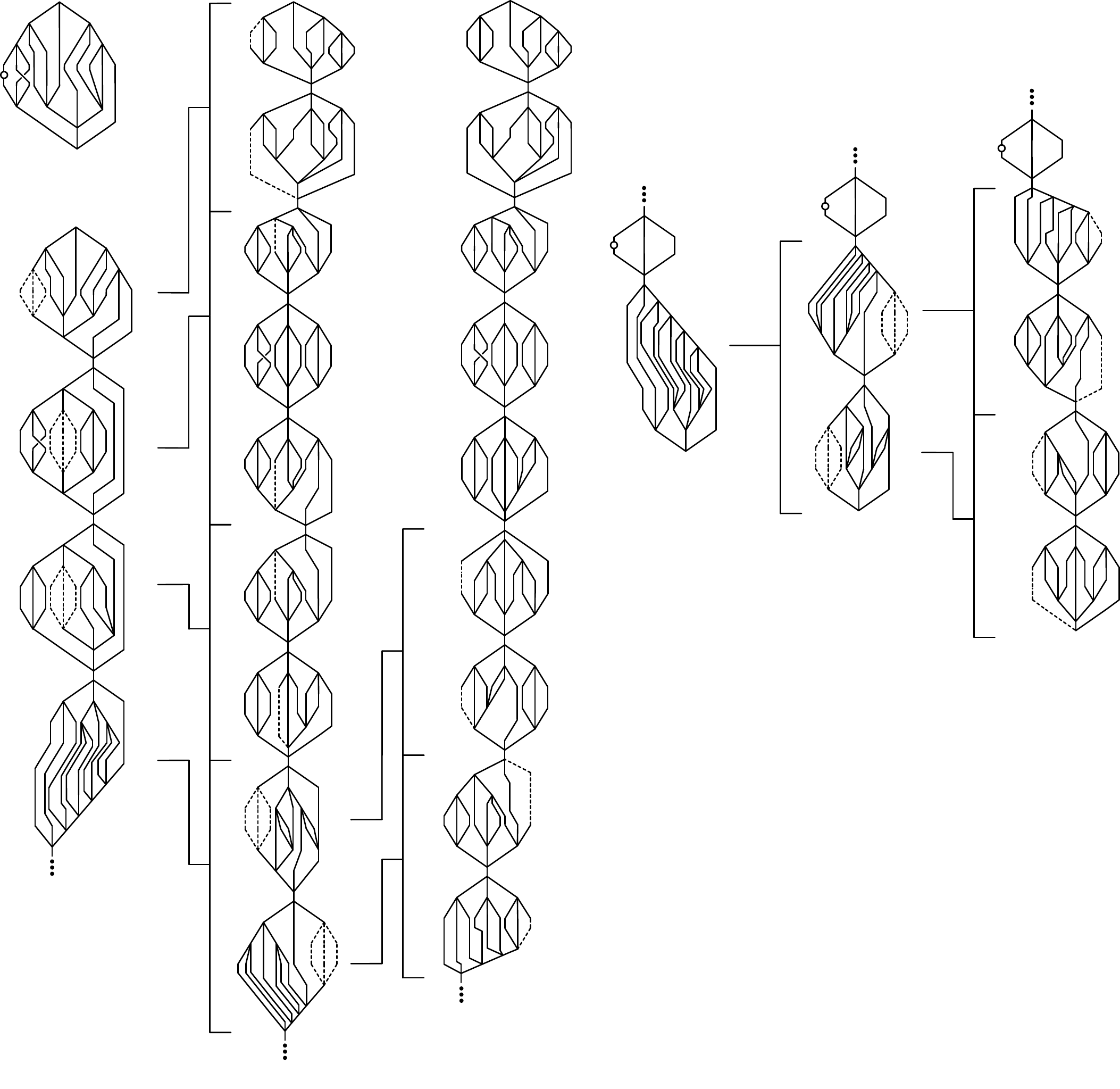}
\caption{How to decompose $v\in BV_3(\mathcal{B}_3)$ in generators of depth less than 5.}
\label{decomposition}
\end{figure}

\medskip

\noindent
\textbf{Example.} \autoref{decomposition} is a visual example of how to decompose an element  of $BV_3(\mathcal{B}_3)$, represented by $(T,\sigma_2, \{\sigma_i, Id, \dots, Id\}, T')$, using the generators of \autoref{cor:fg}. Let $T_d(3)$ be a $n$-ary tree of depth $d$ with exactly $3$ final carets. The reader will notice that, to make this example simpler, we always choose $T_d(3)$ such that we already have the necessary (trivial) ribbons to reduce the depth of elements. We describe now the decomposition process.

\medskip\noindent
The element $(T,\sigma_2, \{\sigma_i, Id, \dots, Id\}, T')$ has depth 5 and is decomposed as $$\left(T, Id,\overrightarrow{Id}, T_5(3)\right) \left(T_5(3), \sigma_2,\overrightarrow{Id}, T_5(3)\right) \left(T_5(3), Id,\overrightarrow{Id}, T'\right)  \left(T', Id,\overrightarrow{Id}, S_5 \right) g_i \left( S_5, Id,\overrightarrow{Id}, T' \right)$$

\begin{enumerate}

\item The element $(T, Id,\overrightarrow{Id}, T_5(3))$ has a ribbon connecting the leftmost final carets of $T$ and $T_5(3)$. Then, there is an equivalent representative $(T_1, Id,\overrightarrow{Id}, T_1')$ of depth $4$ obtained by performing a 1-move that removes the previous final carets. This new representative is decomposed as the product $(T_1,Id,\overrightarrow{Id},T(3))(T(3),Id,\overrightarrow{Id}, T_1')$, where $T_1$ (resp. $T'_1$ is a reduction of $T$ (resp. $T_5(3)$).

\item  The element $(T_5(3), \sigma_2,\overrightarrow{Id}, T_5(3))$ has a ribbon connecting the second final carets of both $T_5(3)$'s. Then, there is an equivalent representative $(T_2, \sigma_2,\overrightarrow{Id}, T_2)$  of depth $4$ obtained by removing the previous final carets. This new representative is decomposed as the product $$(T_2,Id,\overrightarrow{Id},T(3))(T(3),\sigma_2,\overrightarrow{Id},T(3))(T(3),Id,\overrightarrow{Id}, T_2).$$

\item The element $(T_5(3), Id,\overrightarrow{Id}, T')$ has a ribbon connecting the second final carets of $T'$ and $T_5(3)$. Then there is an equivalent representative $(T_3, Id,\overrightarrow{Id}, T_3')$  of depth $4$ obtained by removing the previous final carets. This new representative is decomposed as the product $(T_3,Id,\overrightarrow{Id},T(3)) (T(3),Id,\overrightarrow{Id}, T_3')$.

\item The element $(T', Id,\overrightarrow{Id}, S_5)$ is decomposed as $(T', Id,\overrightarrow{Id}, T_5(3)) (T_5(3), Id,\overrightarrow{Id}, S_5)$. In this case, one can redefine $T_5(3)$ such that:

\begin{enumerate}

\item The element $(T', Id,\overrightarrow{Id}, T_5(3))$ has a ribbon connecting the leftmost final carets of $T'$ and $T_5(3)$. Then there is an equivalent representative $(T_4, Id,\overrightarrow{Id}, T_4')$  of depth $4$ obtained by removing the previous final carets. This new representative is decomposed as  the product $(T_4,Id,\overrightarrow{Id},T(3)) (T(3),Id,\overrightarrow{Id}, T_4').$

\item $(T_5(3), Id,\overrightarrow{Id}, S_5)$  has a ribbon connecting the rightmost final carets of $T'$ and $T_5(3)$. Then there is an equivalent representative $(T_5, Id,\overrightarrow{Id}, T_5')$  of depth $4$ obtained by removing the previous final carets. This new representative is decomposed as  the product $(T_5,Id,\overrightarrow{Id},T(3)) (T(3),Id,\overrightarrow{Id}, T_5').$ 

\end{enumerate} 

Therefore, the element $(T,\sigma_2, \{\sigma_i, Id, \dots, Id\}, T')$ is also represented by the word $$e_{T_1} e^{-1}_{T_1'} e_{T_2} h_2 e^{-1}_{T_2} e_{T_3} e^{-1}_{T_3'}  e_{T_4} e^{-1}_{T_4'} e_{T_5} e^{-1}_{T_5'}  g_i  e_{T_5'} e^{-1}_{T_5} e_{T_4'} e^{-1}_{T_4}.$$
\end{enumerate}

\bigskip\noindent
The set of generators in \autoref{cor:fg} can be further reduced. The following two lemmas will prove that we just need only one generator of type $h_i$, namely $h_{m(n)-1}$.

\begin{lemma}\label{lemma1:fg}
The generators $h_i$, for $1 \leq i \leq n-1$, of \autoref{cor:fg} can be expressed with words containing only generators of type $e$ and $g$. 
\end{lemma}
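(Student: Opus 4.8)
The plan is to exploit the self-similarity of recursive braids. In terms of braided diagrams, the generator $g_i=\left(R,Id,\{\sigma_i,Id,\dots,Id\},R\right)$ is a single white vertex $v^{\sigma_i}$ sitting on the first leaf of the depth-$1$ tree. Applying the inverse of a $1$-move (see \autoref{fig:wtypeI}) unfolds this white vertex once: it becomes a genuine crossing $\sigma_i$ between two of the $n$ children of the first leaf, followed by a white vertex $v^{\sigma_i}$ on each of those $n$ children. Writing $g_i^{[k]}$ for the element carrying a recursive braid $\sigma_i$ on the $k$-th child of the first leaf (and $\overrightarrow{Id}$ elsewhere), this unfolding reads, as a group identity,
\[
g_i = h_i\, c_i, \qquad c_i = g_i^{[0]}g_i^{[1]}\cdots g_i^{[n-1]},
\]
where, for $n>2$, the genuine crossing is exactly the generator $h_i$ of \autoref{cor:fg} (the tree $T(n)=T[0][1][2]$ places the crossing on two children of the first leaf); for $n=2$ one performs one further $e$-conjugation at the end to push the crossing onto the children of the leftmost grandchild, as demanded by $T(2)=T[0][1][00][01]$. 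First I would establish this identity carefully, including the bookkeeping of the permutation induced by $\sigma_i$.

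The $g_i^{[k]}$ have pairwise disjoint supports, hence commute, and $h_i$ only permutes them among themselves; therefore $h_i$ commutes with $c_i$ and $h_i=g_i\,c_i^{-1}$. So it suffices to realise $c_i$ as a word in the generators of type $e$ and $g$. The recursive braid on the leftmost child is the easy case: an element of $F_n$ carries the first leaf onto its leftmost child, and since $F_n$ is generated by the $e_T$ (the braid-free part of \autoref{remark:gen}), conjugating $g_i$ by it exhibits $g_i^{[0]}$ as an $e,g$-word. For $n=2$ there are no other children to worry about, and a direct computation with the commutation relation $h_1^{-1}g_1^{[0]}h_1=g_1^{[1]}$ collapses $h_1=g_1c_1^{-1}$ to the closed form $h_1=(g_1^{[0]})^{-1}\,g_1\,(g_1^{[0]})^{-1}$, which lies in $\langle e,g\rangle$; the extra $e$-conjugation mentioned above then yields the generator $h_1$ of \autoref{cor:fg}.

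The hard part will be the case $n>2$: the remaining recursive braids $g_i^{[k]}$ (for $k\neq 0$) sit on non-leftmost children, which no element of $F_n$ can reach, since $F_n$ preserves the leftmost interval. To move the leftmost recursive braid onto its neighbours I would conjugate by the recursive generators $g_j$, whose underlying permutations are the adjacent transpositions; but because each $g_j$ itself carries recursion, such a conjugation introduces auxiliary recursive braiding rather than merely sliding a white vertex. The braid relations among the $\sigma$'s keep this auxiliary braiding under control, and the outcome is a coupled system relating the various $h_j$ and the child-recursions to one another and to $g_i,\,g_i^{[0]}$. Solving this system — equivalently, checking on the braided diagrams that all the auxiliary recursion produced while transporting the white vertices across the $n$ children cancels in the product $c_i$ — is the combinatorial heart of the lemma, and the step I expect to be the main obstacle.
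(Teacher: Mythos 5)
Your first steps coincide with the paper's own proof: your unfolding identity $g_i = h_i c_i$ is precisely the paper's observation that $v_i := \left(T(n),\sigma_i,\lambda_i,T(n)\right)$, with $\lambda_i$ carrying the label $\sigma_i$ on the first $n$ strands, represents $g_i$; your $g_i^{[k]}$ are the paper's $w_i^{(k)}$; and your treatment of the leftmost child is exactly the paper's base case $w_i^{(0)} \equiv e_{S_d}^{-1} g_i e_{S_d}$, with $S_d$ a spine tree. Your closed form for $n=2$, namely $h_1 = \left(g_1^{[0]}\right)^{-1} g_1 \left(g_1^{[0]}\right)^{-1}$, is also correct (the labels $\sigma_1^{\pm 1}$ cancel along each strand of the composite diagram). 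However, for $n>2$ --- the generic case --- your proposal stops exactly at the step that constitutes the lemma: you note that conjugating by the $g_j$ to transport a white vertex onto a non-leftmost child pollutes the diagram with auxiliary recursion, and you explicitly leave the resulting ``coupled system'' unsolved. That is a genuine gap, not a routine verification: without a device that moves a single white vertex cleanly across the children, neither $c_i$ nor $h_i$ can be reached from the $e$- and $g$-generators.

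The idea you are missing is the paper's family of hybrid elements $z_j := \left(T(n),\sigma_j,\{\xi_k\},T(n)\right)$, where $\xi_k=\sigma_j$ for $k\in\{j+1,\dots,n-1\}$ and $\xi_k=Id$ otherwise: a genuine crossing bundled with just enough residual recursion to be expressible in $e$- and $g$-generators, yet with trivial labels on the strands used for transport. These are produced jointly with the $w_i^{(k)}$ by induction: $w_i^{(k)} \equiv r(k)\, e_{S_d}^{-1} g_i e_{S_d}\, r(k)^{-1}$ with $r(k)=z_k z_{k-1}\cdots z_1$ (this is your transport step, but now clean, because the compensating labels of $r(k)$ and $r(k)^{-1}$ cancel along every strand), then $w_i^{(i)} \equiv v_i\, w_i^{(i-1)} v_i^{-1}$ and $z_i \equiv v_i \left(w_i^{(0)}\right)^{-1}\cdots\left(w_i^{(i)}\right)^{-1}$. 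Note also that this only yields $w_i^{(k)}$ for $k\leq i$, so the paper does not conclude along your line $h_i=g_i c_i^{-1}$; instead it runs a downward induction on $i$: the base case is $h_{n-1}=z_{n-1}$ (which has no labels at all), and then $h_i \equiv h_{i+1}\, w_i^{(i-1)}\, z_i\, h_{i+1}\, z_i^{-1} \left(w_i^{(i-1)}\right)^{-1} h_{i+1}^{-1}$, where the braid relation $\sigma_{i+1}\sigma_i\sigma_{i+1}\sigma_i^{-1}\sigma_{i+1}^{-1}=\sigma_i$ kills the leftover labels. Once the $z_j$ are available your original plan could in fact also be completed, since $w_i^{(j)} \equiv z_j\, w_i^{(j-1)} z_j^{-1}$ for $j>i$ supplies the remaining factors of $c_i$; but with or without that variant, the $z_j$ (or some equivalent device) are the indispensable ingredient your proposal lacks.
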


\noindent
Note that, once this lemma will be proved, the generators $h_i$ for $i \in \{n,\dots,m(n)-1\}$ will remain. They will be treated in the next lemma, proving that only $h_{m(n)-1}$ is needed.

\begin{proof}

First of all, notice that $v_i:=(T(n), \sigma_i, \lambda_i, T(n))$, where $\lambda_i=  \{\overbrace{\sigma_i,\dots, \sigma_i}^{n},Id,\dots, Id\}$ is equivalent to the  generator $g_i$. 

\medskip\noindent
Define \begin{align*}
w_i^{(k)} & :=\left(T(n),Id, \{\mu_j\}_{j=0}^{m(n)-1}, T(n)\right) \ \mbox{where} \ \begin{cases}  \mu_j= \sigma_i, \ \mbox{if} \ j =k,\\ \mu_j = Id, \ \mbox{otherwise}. \\
\end{cases}\\
z_i & :=\left(T(n),\sigma_i, \{\xi_j\}_{j=0}^{m(n)-1}, T(n)\right) \ \mbox{where} \ \begin{cases}
\xi_j =\sigma_i, \  \mbox{if} \ j \in \{i+1,\dots, n-1\},\\
\xi_j=Id, \ \mbox{otherwise}.\\
\end{cases}
\\
\end{align*} for $i \in \{1,\dots, n-1\}$.

\bigskip\noindent
\emph{Claim.} For any $i\in \{1,\dots, n-1\}$, the elements $z_i$ and $w_i^{(k)}$, with $0\leq k \leq i$, can be generated by using only type $e$ and type $g$ generators. 

\medskip\noindent
We prove it by induction. For $k=0$, ${w_i^{(0)}}\equiv e_{S_d}^{-1} g_i e_{S_d}$, where $S_d$ is a spine tree and $d$ is the depth of $T(n)$ depending on whether $n =2$ or not. Suppose that the claim holds for $j<k<i$. We need to prove that we can generate $w_i^{(k)}$. Denote by $r_p$ the braid $\sigma_{p}\sigma_{p-1}\cdots \sigma_1$ for $p>1$ and consider the element represented by $r(k):= z_{k} z_{k-1}\cdots z_{1}$ for $0\leq k < i$. A representative of this element is $(T(n), r_k, \rho_k, T(n))$, where $$\rho_k=\{Id, Id,r_1,r_2,\dots, r_{k},\overbrace{r_{k+1},\dots, r_{k+1}}^{n-k-2}, Id,\dots, Id\}.$$ Then we have that  $w_i^{(k)}\equiv r(k) e_S^{-1} g_i e_S r(k)^{-1}$, for $0\leq k<i$ (see \autoref{fig:lema2}).

\noindent
Now define $x_i:= v_i \left( w_i^{(i-1)}\right)^{-1}$, so $w_i^{(i)}\equiv v_i  x_i ^{-1}$ (see \autoref{fig:lema1}). To finish the proof of the claim, notice that $z_i\equiv v_i  \left(w_i^{(0)}\right)^{-1} \left(w_i^{(1)}\right)^{-1}\cdots \left(w_i^{(i)}\right)^{-1}$.

\medskip

To prove the statement of the lemma, we also proceed by induction. For $i=n-1$, $h_{n-1}=z_{n-1}$. Now suppose that the statement holds for $i+1$, we prove it for $i$. By Claim~1, we have seen that $w_i^{(i-1)}$ can be expressed using generators of type $e$ and $g$. We prove that $h_i$ is equal to the following product: \[ h_{i+1} w_i^{(i-1)} z_i  h_{i+1}  z_{i}^{-1}  \left( w_i^{(i-1)}\right)^{-1}  h_{i+1}^{-1}.\]

\begin{figure}[H]
\centering
\labellist

\pinlabel $\sigma_2$ at 288 332 
\pinlabel $\sigma_1$ at 269 332 
\pinlabel $\sigma_i$ at 250 332 

\pinlabel $\sigma_2^{-1}$ at 290 308 
\pinlabel $\sigma_1^{-1}$ at 269 308 

\pinlabel $w_i^{(2)}$ at 347 193 
\pinlabel $\equiv$ at 225 193 
\pinlabel $\equiv$ at 100 193 

\pinlabel $r(2)$ at -11 330 
\pinlabel $e_S^{-1}$ at -11 250
\pinlabel $g_i$ at -11 185 
\pinlabel $e_S$ at -11 125 
\pinlabel $r(2)^{-1}$ at -16 40 

\endlabellist
\includegraphics[scale=0.9]{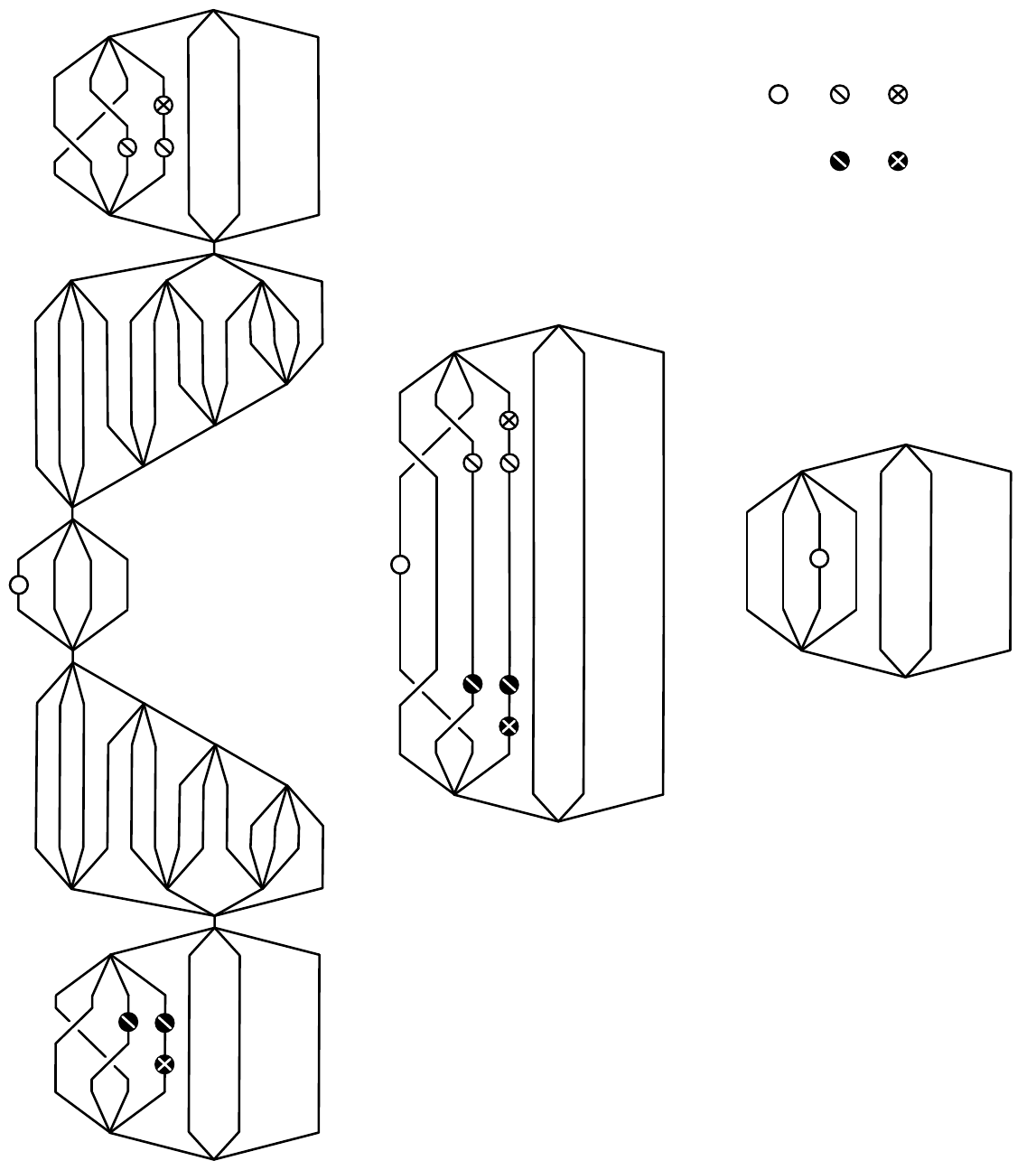}
\caption{$w_i^{(2)}\equiv r(2)e_S^{-1}  g_i  e_S r(2)^{-1}$ for $n=4$. Consider the diagrams up to equivalence to match the labels.}
\label{fig:lema2}
\end{figure}
\begin{figure}[h]
\centering
\labellist
\pinlabel $\sigma_1^{-1}$ at -10 40 
\pinlabel $\sigma_1$ at -10 135 
\pinlabel $v_1$ at 70 160
\pinlabel $x_1^{-1}$ at 70 70
\pinlabel $\equiv$ at 100 93
\pinlabel $\equiv$ at 215 93
\pinlabel $\sigma_1$ at 260 98
\pinlabel $w_1^{(1)}$ at 298 110

\endlabellist
\includegraphics[scale=0.8]{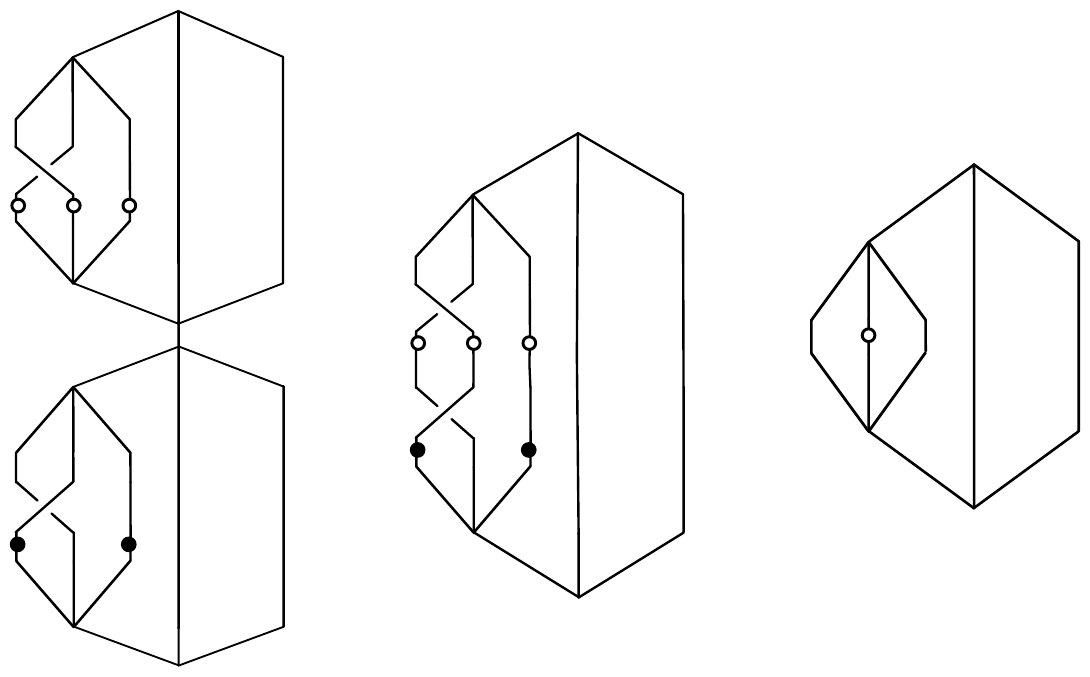}
\caption{$w_1^{(1)}\equiv v_1 x_1 ^{-1}$ for $n=3$. Consider the diagrams up to equivalence to match the labels.}
\label{fig:lema1}
\end{figure}
Notice that $ z_i  h_{i+1}  z_{i}^{-1}\equiv \left(T(n),\sigma_i\sigma_{i+1}\sigma_{i}^{-1}, \{\nu_j\}_{j=0}^{m(n)-1} ,T(n)  \right)$, where $\nu_{i-1}= \sigma_i$, $\nu_{i+1}=\sigma_i^{-1}$ and $\nu_j=Id$ if $j\neq i,i+2$ (see \autoref{fig:lema3}).  
By the braid relation $\sigma_i\sigma_{i+1}\sigma_{i}^{-1} (i)= i+2$, the conjugate of $\left(T(n),\sigma_i\sigma_{i+1}\sigma_{i}^{-1}, \{\nu_j\}_{j=0}^{m(n)-1} ,T(n)\right)$ by $w_i^{(i-1)}$ has trivial labels. Hence we obtain 
\[ h_{i+1}\left(T(n),\sigma_i\sigma_{i+1}\sigma_{i}^{-1}, \overrightarrow{Id} ,T(n)\right)   h_{i+1}^{-1}  \equiv \left(T(n),\sigma_{i+1}\sigma_i\sigma_{i+1}\sigma_{i}^{-1} \sigma_{i+1}^{-1}, \overrightarrow{Id} ,T(n)  \right).\]
By using braid relations we know that $\sigma_{i+1}\sigma_i\sigma_{i+1}\sigma_{i}^{-1} \sigma_{i+1}^{-1}=\sigma_i$, so the expression is equivalent to $h_i$, as we wanted to prove.
\end{proof}

\begin{figure}[H]
\centering
\labellist

\pinlabel $\sigma_2$ at 235 164
\pinlabel $\sigma_2^{-1}$ at 255 164
\pinlabel $z_2$ at -8 155
\pinlabel $h_3$ at -8 96
\pinlabel $z_2^{-1}$ at -8 35
\pinlabel $\equiv$ at 190 100
\pinlabel $\equiv$ at 90 100

\endlabellist
\includegraphics[scale=1]{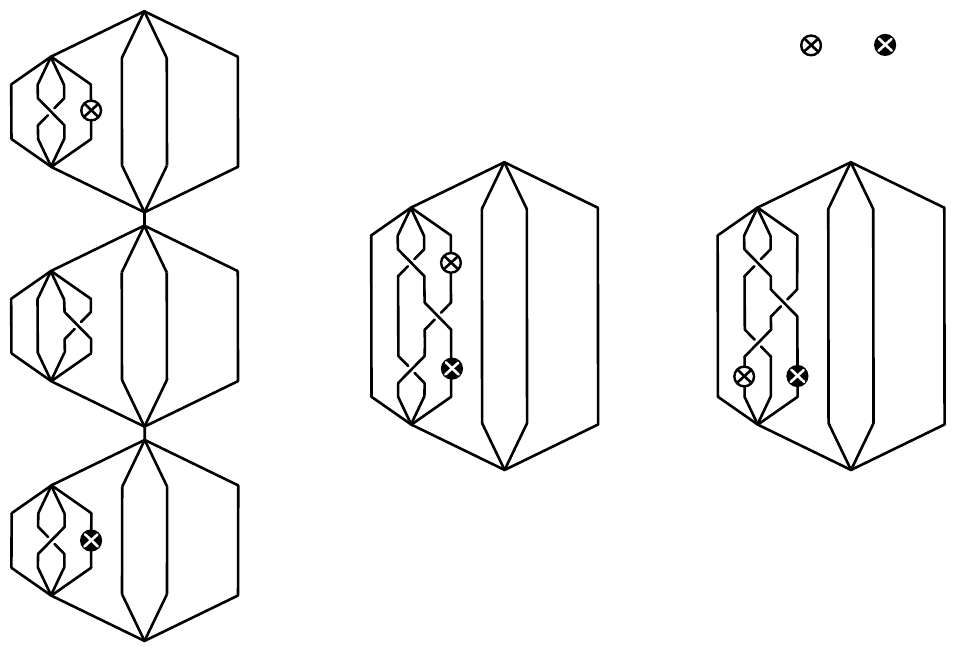}
\caption{ $ z_2  h_{3} z_{2}^{-1}\equiv (T(4),\sigma_2\sigma_{3}\sigma_{2}^{-1}, \nu_2 ,T(4))$ for $n=4$. Consider the diagrams up to equivalence to match the labels.}
\label{fig:lema3}
\end{figure}

\begin{lemma}\label{lemma2:fg}
All generators $h_i$, for $i\geq n$, can be expressed in terms of generators $h_i$ for $i<n$ and $h_{m(n)-1}$.
\end{lemma}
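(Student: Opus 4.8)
The plan is to work inside the copy of the braid group $\mathcal{B}_{m(n)}$ sitting in $BV_n(\mathcal{B}_n)$ and to combine braid relations with conjugation by the tree-changing generators that \autoref{cor:fg} and \autoref{remark:gen} let us use freely. First I would record that the assignment $\beta \mapsto (T(n),\beta,\overrightarrow{Id},T(n))$ is an injective homomorphism $\mathcal{B}_{m(n)} \hookrightarrow BV_n(\mathcal{B}_n)$, so that every braid relation among the $\sigma_i$ holds among the $h_i$. It is essential to observe that braid relations alone cannot suffice: the subgroup $\langle \sigma_1,\dots,\sigma_{n-1},\sigma_{m(n)-1}\rangle$ is a proper (reducible parabolic) subgroup of $\mathcal{B}_{m(n)}$ — indeed $\sigma_{m(n)-1}$ commutes with each of $\sigma_1,\dots,\sigma_{n-1}$, since $m(n)-1-(n-1)>1$ — so it does not contain the intermediate $\sigma_i$. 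Hence the proof must genuinely exploit conjugation by the $e$-type (and, through \autoref{lemma1:fg}, $g$-type) generators, which transport crossings between the different carets of $T(n)$.

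The engine of the argument is a transport step: conjugating a generator $h_i$ by a suitable product of $e$-generators (an $F_n$-element, composed if necessary with one expansion and one reduction so that the domain and range trees return to $T(n)$) carries the crossing $\sigma_i$ to a crossing $\sigma_{i'}$ in a neighbouring caret, i.e.\ $\phi\, h_i\, \phi^{-1} \equiv h_{i'}$ modulo $e$-generators. Geometrically this is just the assertion that the $3$-, $5$- and $6$-moves of \autoref{sec:3} slide the crossing across the splits and merges of the forest without producing new crossings. Granting this, the first-caret generators $h_1,\dots,h_{n-1}$ (already reduced to $e$- and $g$-generators in \autoref{lemma1:fg}) play exactly the role that the labels $g_j$ played there, now transported to deeper carets, and the braid identity $\sigma_{i+1}\sigma_i\sigma_{i+1}\sigma_i^{-1}\sigma_{i+1}^{-1}=\sigma_i$ used in \autoref{lemma1:fg} trades one Artin generator for its neighbour. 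I would then run an induction that sweeps inward from both ends of the strand index: upward from the block $\{h_1,\dots,h_{n-1}\}$ and downward from the anchor $h_{m(n)-1}$, expressing each intermediate $h_i$ (for $n\le i\le m(n)-2$) in terms of generators already obtained together with $e$- and $g$-generators.

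The hard part will be the bookkeeping at the caret boundaries. Conjugation by $e$-generators is order-preserving (it lands in $F_n$), so it can never map a crossing internal to one caret onto a crossing internal to a caret lying to its left; in particular it cannot by itself relate the crossings inside caret $1$ or caret $2$, nor the between-caret crossings, to the first-caret block. This is precisely where the single anchor $h_{m(n)-1}$ is both necessary and sufficient: the transposition it contributes, unavailable from any order-preserving element, is what bridges the expanded region of $T(n)$ to its right end (and, for $n\ge 4$, to the un-expanded leaves), letting the two inductive sweeps meet. The main technical obstacle is therefore to verify that each conjugate returns to an honest $h$-type generator $(T(n),\sigma_{i'},\overrightarrow{Id},T(n))$ after absorbing the auxiliary $e$-generators — equivalently, that the prescribed sequence of moves leaves no residual crossing — and to confirm that the inductive scheme covers every index $i$ with $n\le i\le m(n)-2$. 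As in \autoref{lemma1:fg}, each individual step reduces to a short braid computation that is read off most easily from the corresponding braided diagram.
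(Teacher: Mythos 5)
Your ``transport step'' is indeed the engine of the paper's argument: the inductive step there is precisely $h_i \equiv \phi\, h_{i-n+1}\, \phi^{-1}$ for an explicit $\phi\in F_n$ built from two tree-change elements, coming from the equivalence $\left(T'(n),\sigma_j,\overrightarrow{Id},T'(n)\right)\equiv\left(T''(n),\sigma_{j+n-1},\overrightarrow{Id},T''(n)\right)$, valid for $1<j<m(n)-n$. But your global scheme fails exactly at the two places transport cannot go, and the culprit is a pair of conjugation invariants that you half-identify and then misapply. Every element of $F_n$ fixes the endpoints $0$ and $1$ and maps standard intervals to unions of standard intervals (changing counts only by multiples of $n-1$); hence conjugation by $e$-type elements preserves (i) whether the crossing involves the interval containing $1$, (ii) whether it involves the interval containing $0$, and (iii) the number of intervals to the left of the crossing modulo $n-1$. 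Your stated invariant --- that a crossing can never be moved to a caret on its left --- is not one: conjugation is symmetric, and the paper's shift does move crossings from caret to caret (e.g.\ $h_{n+1}$ \emph{is} an $F_n$-conjugate of $h_2$). Invariant (i) kills your downward sweep outright: every conjugate of $h_{m(n)-1}$ still crosses the rightmost interval (every tree-pair representative of $h_{m(n)-1}$ has its last strand crossed), so it can never equal $h_i$ for $i\le m(n)-2$, the anchor cannot be transported even one step, and the two sweeps can never meet. Indeed the paper's proof of this lemma never uses $h_{m(n)-1}$ at all: it appears in the statement only because it is the one index the upward sweep cannot reach, so it must simply remain in the generating set. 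Calling it ``necessary and sufficient'' for bridging gets its role exactly backwards.

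Independently, your upward sweep is missing its base case, and invariants (ii) and (iii) show that no amount of transport can supply it. The between-caret crossing $h_n$ has $n-1\equiv 0 \pmod{n-1}$ intervals to its left and touches neither endpoint; the only available generator in that residue class is $h_1$, whose left region is empty (its crossing involves the interval containing $0$). So no sequence of transport steps produces $h_n$, nor any $h_i$ with $i\equiv 1 \pmod{n-1}$: an entire congruence class of indices is unreachable by conjugation from your starting data. The paper closes this gap with a product, not a conjugation: for suitable re-expansions $T_1,T_2$ of $T(n)$ with its last caret removed, one has $h_1\equiv \left(T_1,\sigma_n\sigma_{n-1}\cdots\sigma_1,\overrightarrow{Id},T_2\right)$, so that sandwiching with tree-changes gives $\left(T(n),\sigma_n\cdots\sigma_1,\overrightarrow{Id},T(n)\right)=h_nh_{n-1}\cdots h_1$, whence
\[ h_n\equiv \left(T(n),Id,\overrightarrow{Id},T_1\right) h_1 \left(T_2,Id,\overrightarrow{Id},T(n)\right) h_1^{-1}h_2^{-1}\cdots h_{n-1}^{-1}. \]
This identity (or some substitute for it) is the missing idea in your proposal; without it the induction never starts on the between-caret crossings, and with it the single upward sweep of \autoref{cor:fg}'s generators $h_1,\dots,h_{n-1}$ already covers every index up to $m(n)-2$, no second anchor needed.
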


\begin{proof} We proceed by induction. Let $c$ be the right-most final caret of $T(n)$:
$$c= \left\{ \begin{array}{ll}
\{10,11\} &  \text{ if } n=2, \\
\{20,\dots,2(n-1)\} & \text{ otherwise. }
\end{array} \right.$$
Let also
$$T_1= \left\{ \begin{array}{ll}
T(n)[c]^{-1}[000] &  \text{ if } n=2, \\
T(n)[c]^{-1}[00] & \text{ otherwise; }
\end{array} \right. \text{ and } \quad T_2= \left\{ \begin{array}{ll}
T(n)[c]^{-1}[001] &  \text{ if } n=2, \\
T(n)[c]^{-1}[01] & \text{ otherwise. }
\end{array} \right.$$
Note that $h_1 \equiv \left(T_1, \sigma_{n}\sigma_{n-1}\dots \sigma_1  ,\overrightarrow{Id}, T_2\right).$ Hence we have the product (see \autoref{fig:lema4}): \[h_n\equiv  \left(T(n),Id, \overrightarrow{Id}, T_1\right) h_1  \left(T_2,Id, \overrightarrow{Id}, T(n)\right)  h_1^{-1}  h_2^{-1} \cdots  h_{n-1}^{-1} .\]

Suppose that the statement of the lemma is true for every $j<i$ when $i>n$, and let $T'(n):= T(n)[c]^{-1}[n-1]$ for $n>3$, that is, the tree obtained from $T(n)$ by attaching a final caret to its last leaf and erasing the final caret $c$. If $n=2,3$, we set $T'(n)=T(n)$. Also let $T''(n):= T'(n)[\{(n-1)0,\dots,(n-1)(n-1)\}]^{-1}[0]$.  Notice that, for $1<i<m(n)-n$, we have \[\left(T'(n),\sigma_i ,\overrightarrow{Id},T'(n)\right)\equiv \left(T''(n), \sigma_{i+n-1},\overrightarrow{Id}, T''(n)\right)\]
Then we obtain $h_i$ using the following product (see \autoref{fig:lema5}): \[h_i\equiv  \left(T(n),Id, \overrightarrow{Id }, T''(n) \right) \left(T'(n),Id, \overrightarrow{Id }, T(n) \right) h_{i-n+1}  \left(T(n),Id, \overrightarrow{Id }, T'(n) \right) \left(T''(n),Id, \overrightarrow{Id }, T(n) \right).   \]
\end{proof}

\begin{figure}[H]
\centering
\labellist

\pinlabel $h_2^{-1}$ at -13 50
\pinlabel $h_1^{-1}$ at -13 145
\pinlabel $h_1$ at -13 325
\pinlabel $(T(3),Id,T_1)$ at -45 420
\pinlabel $(T_2,Id,T(3))$ at -45 235
\pinlabel $h_1$ at 505 230
\pinlabel $\equiv$ at 376 230

\endlabellist
\includegraphics[scale=0.65]{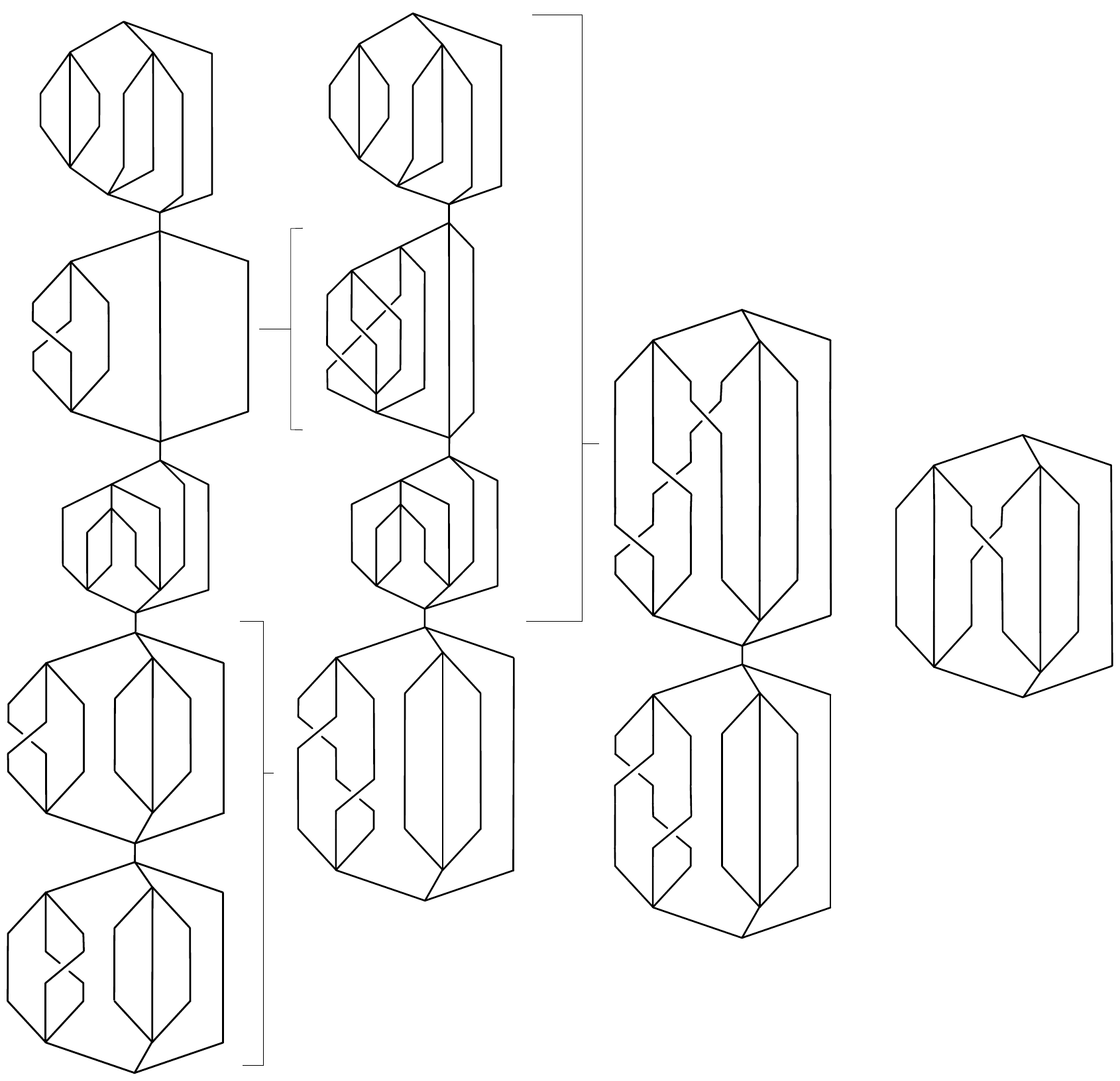}
\caption{ $h_3\equiv (T(3),Id, T_1) h_1  (T_2,Id, T(n)) h_1^{-1}  h_2^{-1}$ for $n=3$.}
\label{fig:lema4}
\end{figure}

\bigskip

\begin{figure}[H]
\centering
\labellist
\pinlabel $\equiv$ at 65 75
\pinlabel $h_{11}$ at 142 75
\pinlabel $(T'(4),\sigma_8,T'(4))$ at -38 78
\pinlabel $(T(4),Id,T''(4))$ at -38 125
\pinlabel $(T''(4),Id,T(4))$ at -38 25

\endlabellist
\includegraphics[scale=1.2]{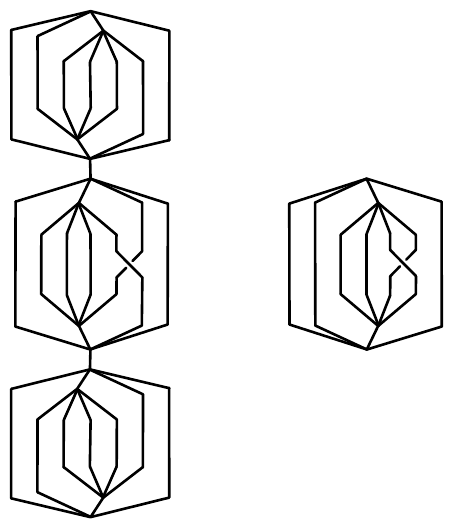}
\caption{How to shift a crossing to the right using conjugation when $n=4$. Consider the diagrams up to equivalence to match the labels.}
\label{fig:lema5}
\end{figure}

\noindent
The generators of type $e$ belong to the well studied Thompson's group $F_n$, consisting of all elements represented by triples $(T, \tau, T')$ where $\tau$ is trivial. This group is finitely generated:

\begin{lemma}[{\citealp{CFP, Brown}}]\label{lemma3:fg}
The generators of type $e$ in \autoref{cor:fg} can be generated by using the $n$ elements $x_0,\dots,x_{n-1}$ depicted in \autoref{fig:generators}. 
\end{lemma}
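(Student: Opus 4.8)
The plan is to recognise each type $e$ generator as an element of Thompson's group $F_n$ and then invoke the classical generation theorem for $F_n$. By \autoref{cor:fg} every such generator has the form $e_T=(T,Id,\overrightarrow{Id},T(n))$: its braid is trivial and all of its labels are trivial. Hence $e_T$ lies in the subgroup of $BV_n(\mathcal{B}_n)$ formed by the elements $(T',Id,\overrightarrow{Id},T'')$, which is exactly a copy of $F_n$ (the elements with trivial bijection, as recalled in \autoref{sec:2}). It therefore suffices to show that these particular elements of $F_n$ lie in $\langle x_0,\dots,x_{n-1}\rangle$, and for this I would appeal directly to the theorem of \citep{Brown} (see also \citep{CFP}) that $x_0,\dots,x_{n-1}$ generate all of~$F_n$.

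If one prefers a self-contained argument rather than a citation, I would reprove the needed inclusion exactly along the lines of \citep[Chapter~4]{Higman}, by induction on the number of carets of a reduced tree-pair. The point is that every element of $F_n$ is represented by a pair of trees with the same number of leaves and a trivial bijection, and that right multiplication by a suitable $x_i^{\pm 1}$ realises the elementary operation of adding or deleting a single final caret in a prescribed position. Starting from $e_T=(T,Id,\overrightarrow{Id},T(n))$ and noting $e_{T(n)}=(T(n),Id,\overrightarrow{Id},T(n))=Id$, one strips off the carets of $T$ and of $T(n)$ one at a time, each step multiplying by one of the $x_i^{\pm 1}$, until the tree-pair is reduced to the identity; reading the steps backwards expresses $e_T$ as a word in $x_0,\dots,x_{n-1}$.

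The main work, and really the only obstacle, is bookkeeping. One must first check that the elements $x_0,\dots,x_{n-1}$ drawn in \autoref{fig:generators} are genuinely the standard Brown--Thompson generators of $F_n$, and then verify, tree by tree, which $x_i^{\pm 1}$ performs the required caret move at each stage of the induction. Because \autoref{cor:fg} only produces trees of depth less than $5$ (or $4$ when $n>2$), the induction terminates after boundedly many steps and only finitely many such caret moves ever occur; no genuine difficulty arises beyond matching the figure to the known generators and carrying out the straightening. This is precisely why the statement can be cited from \citep{CFP,Brown}.
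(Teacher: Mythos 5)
Your proposal matches the paper exactly: the paper gives no independent proof of this lemma, but simply observes that the type $e$ generators (having trivial braid and trivial labels) lie in the copy of Thompson's group $F_n$ inside $BV_n(\mathcal{B}_n)$ and then cites \citep{CFP,Brown} for the fact that $x_0,\dots,x_{n-1}$ generate $F_n$, which is precisely your first paragraph. Your additional sketch of a self-contained Higman-style induction is a reasonable supplement but not needed, and is not carried out in the paper either.
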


\noindent
Therefore, we have proved that the $2n$ elements in \autoref{theorem:fg}, which are precisely $x_0,\dots,x_{n-1}$, $h_{m(n)-1}$, $g_1,\dots, g_{n-1}$, generate $BV_n(\mathcal{B}_n)$.

\begin{figure}[h]
\centering
\labellist

\pinlabel $x_i$ at 55 53
\pinlabel $i$ at 96 51
\pinlabel $i\in\{0,\dots,n-2\}$ at 63 11
\pinlabel $x_{n-1}$ at 206 53

\endlabellist
\includegraphics[scale=1.5]{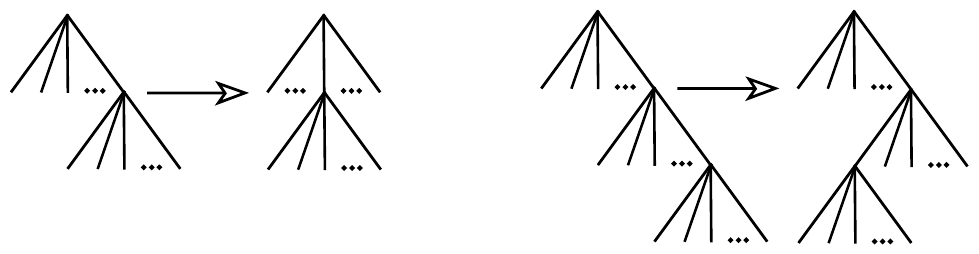}
\caption{Generators of $F_n$.}
\label{fig:generators}
\end{figure}

\subsubsection{Other generating systems for specific subgroups $H$}

The reader must have noticed that in most of the cases, the set of generators given in \autoref{theorem:fg} is not contained in $BV_n(H)$. 

\medskip
\noindent
If $H$ is trivial,
\autoref{remark:gen} together with \autoref{lemma2:fg} and \autoref{lemma3:fg} provide another set of $2n$ generators for $BV_n$ that is contained in $BV_n$. These elements are $x_1,\dots, x_n,h_1,\dots,h_{n-1}, h_{m(n)-1}$. In particular, $BV_2$ is generated by $x_1$, $x_2$, $h_1$ and $h_5$. The generators of $BV_2$ given by \citep{Brin2} are also four: $x_1$, $x_2$, $h_1$ and $(R,\sigma_1,R)$, where $R$ is the tree of depth 1.  

\medskip

\noindent
In general, if there is a known generating system $S_H$ for $H$, one can use the three mentioned results to obtain a generating system $$\left\{ (R,Id, \{s,Id,\dots, Id \}  ,R) \,|\, s\in S_H\right\} \cup \{x_1,\dots, x_n,h_1,\dots,h_{n-1}, h_{m(n)-1}\}.$$

%
%
%
%
%

\subsection{The specific case when $H$ is a parabolic subgroup.}
 
Important types of subgroups $H$ are parabolic subgroups. If $\Sigma$ is the set of Artin generators of $\mathcal{B}_n$, a \emph{standard parabolic subgroup} $A_X$ is the subgroup of $\mathcal{B}_n$ generated by a subset $X\subset S$. 
A \emph{parabolic subgroup} is defined as any conjugate of a standard parabolic subgroup.
To obtain a set of $2n$ generators in $BV_n(A_X)$, one shall slightly modify the proof of \autoref{lemma1:fg}:

\begin{lemma}\label{lemma4:fg}
Let $X$ be a subset of the Artin generators $\Sigma$ of $\mathcal{B}_n$. The generators $h_i$ in \autoref{cor:fg} such that $\sigma_i\in X$ can be expressed as a word using generators of type $e$, $h_j$ such that $\sigma_j \in \Sigma \setminus X$ and $g_i$ such that $\sigma_i\in X$.
\end{lemma}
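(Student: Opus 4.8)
The plan is to rerun the proof of \autoref{lemma1:fg}, but bookkeeping carefully which generators each step actually consumes and substituting an admissible generator for every inadmissible one. Recall that there each $h_i$ with $1\leq i\leq n-1$ is produced by the downward recursion
\[
h_i\equiv h_{i+1}\, w_i^{(i-1)}\, z_i\, h_{i+1}\, z_i^{-1}\,\bigl(w_i^{(i-1)}\bigr)^{-1}\, h_{i+1}^{-1},
\]
with base case $h_{n-1}\equiv z_{n-1}$, and that the auxiliary elements $z_i,w_i^{(k)}$ are assembled from generators of type $e$ and type $g$. In $BV_n(A_X)$ the type $g$ generator $g_j$ is admissible exactly when $\sigma_j\in X$, since its label must lie in $A_X$; on the other hand we are now allowed to use the braid generators $h_j$ with $\sigma_j\in\Sigma\setminus X$, which \autoref{lemma1:fg} could not.

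First I would eliminate the $h_i$ in decreasing order $i=n-1,\dots,1$, rewriting only those with $\sigma_i\in X$ and leaving each $h_j$ with $\sigma_j\notin X$ in place. With this order the factor $h_{i+1}$ in the displayed recursion is always admissible: it is a retained generator when $\sigma_{i+1}\notin X$, and it has already been rewritten at the earlier step when $\sigma_{i+1}\in X$, so no circularity occurs. Moreover every label appearing in $v_i\equiv g_i$, in $z_i$ and in $w_i^{(k)}$ is a power of $\sigma_i$, hence admissible precisely because $\sigma_i\in X$. Thus the displayed identity will express $h_i$ in the desired form as soon as the auxiliary elements $z_i$ and $w_i^{(k)}$ are themselves produced without inadmissible labels.

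The single obstruction is the conjugator $r(k)=z_k\cdots z_1$ that realizes $w_i^{(k)}\equiv r(k)\,e_S^{-1}g_i e_S\,r(k)^{-1}$, since the factor $z_j$ carries labels $\sigma_j$ which may lie outside $X$. The modification I would make is to replace it by a mixed conjugator $\widetilde r(k)=C_k\cdots C_1$, where $C_j:=z_j$ (built from $g_j$) when $\sigma_j\in X$ and $C_j:=h_j$ when $\sigma_j\notin X$. Each $C_j$ induces the same leaf transposition as $\sigma_j$, so $\widetilde r(k)$ still has braid part $r_k=\sigma_k\cdots\sigma_1$ and carries the leaf $0$ strand of $e_S^{-1}g_i e_S$ onto leaf $k$. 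Because conjugation by an element of trivial label transports a recursive label along its strand according to the induced permutation, the pure braid factors $h_j$ move the $\sigma_i$-label cleanly, while the surviving $z_j$ factors behave exactly as in \autoref{lemma1:fg}; hence $\widetilde r(k)$ conjugates $e_S^{-1}g_i e_S$ to the same clean element $w_i^{(k)}$, now using only type $e$, the $g_j$ with $\sigma_j\in X$, and the $h_j$ with $\sigma_j\notin X$.

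The step I expect to cost the most effort is verifying this last claim in full: that after the reductions of \autoref{sec:3} the mixed conjugation $\widetilde r(k)\,e_S^{-1}g_i e_S\,\widetilde r(k)^{-1}$ collapses to $w_i^{(k)}$ with no surviving labels other than the transported $\sigma_i$. This is the parabolic analogue of Claim~1 in \autoref{lemma1:fg}: one tracks the $\sigma_i$-label through the braided diagram and checks that the auxiliary $\sigma_j$-labels contributed by the factors $C_j$ cancel against those of $\widetilde r(k)^{-1}$, exactly as the labels $\rho_k$ of $r(k)$ do in the original argument. Granting this, the recursion expresses every $h_i$ with $\sigma_i\in X$ as a word in the permitted generators, which is the statement of the lemma; combined with \autoref{lemma2:fg} and \autoref{lemma3:fg} it furnishes the promised $2n$ generators inside $BV_n(A_X)$.
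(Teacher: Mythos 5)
Your proposal is correct and takes essentially the same route as the paper: the paper's proof of this lemma also reruns \autoref{lemma1:fg} with a mixed conjugator $r(k)=z'_k z'_{k-1}\cdots z'_1$, where $z'_q=z_q$ if $\sigma_q\in X$ and $z'_q=h_q$ otherwise --- precisely your $\widetilde r(k)$, with the same label bookkeeping showing the resulting labels $\rho_k$ lie in $A_X$ and cancel under conjugation. The final downward induction via $h_{i+1}\, w_i^{(i-1)}\, z_i\, h_{i+1}\, z_i^{-1}\,\bigl(w_i^{(i-1)}\bigr)^{-1}\, h_{i+1}^{-1}$, starting from $h_{n-1}=z_{n-1}$, is likewise identical to the paper's.
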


\begin{proof}

As for \autoref{lemma1:fg}, let
\begin{align*}
w_i^{(k)} & :=\left(T(n),Id, \{\mu_j\}_{j=0}^{m(n)-1}, T(n)\right) \ \mbox{where} \ \begin{cases}  \mu_j= \sigma_i, \ \mbox{if} \ j =k,\\ \mu_j = Id, \ \mbox{otherwise}. \\
\end{cases}\\
z_i & :=\left(T(n),\sigma_i, \{\xi_j\}_{j=0}^{m(n)-1}, T(n)\right) \ \mbox{where} \ \begin{cases}
\xi_j =\sigma_i, \  \mbox{if} \ j \in \{i+1,\dots, n-1\},\\
\xi_j=Id, \ \mbox{otherwise}.\\
\end{cases}
\\
\end{align*} for $i \in \{1,\dots, n-1\}$.
Let also $I:=\{i \,|\,\sigma_i\in X \}$.

\medskip\noindent
\emph{Claim.} For any $i\in I$, the elements $z_i$ and $w_i^{(k)}$, with $0\leq k \leq i$, can be expressed by using generators of type~$e$, $h_j$ such that $j\not\in I$ and $g_{i'}$ such that $i'\in I$.

\medskip\noindent
For $k=0$, ${w_i^{(0)}}\equiv e_S^{-1} g_i e_S$, where $S$ is the spine tree of same depth as $T(n)$. 
We have also seen that if $x_i:= g_i  \left(w_i^{(i-1)}\right)^{-1}$, then $w_i^{(i)}\equiv g_i  x_i ^{-1}$. Since $z_i\equiv g_i  \left( w_i^{(0)}\right)^{-1} \left(w_i^{(1)}\right)^{-1}\cdots \left(w_i^{(i)}\right)^{-1}$, we have to prove that $w_i^{(k)}$ is generated as desired, for $1<k<i$.  

\smallskip
Suppose that the claim holds for $j<k<i$.
Denote by $r_p$ the element $\sigma_{p}\sigma_{p-1}\cdots \sigma_1$ for $p>1$ and  let $r'_p:=\sigma'_{p}\sigma'_{p-1}\cdots \sigma'_1$ where $\sigma'_q=\sigma_q$ if $q\in I$ and $\sigma'_q=Id$, if $q\not\in I$. 
Consider the element represented by $r(k):= z'_{k}z'_{k-1}\cdots z'_{1}$ for $0\leq k < i$, where $z'_q=z_q$ if $q\in I$ and $z'_q=h_q$, if $q\not\in I$. A representative of this element is $\left(T(n), r_k, \rho_k, T(n)\right)$ where $$\rho_k=\{Id, Id,r'_1,r'_2\dots, r'_{k},\overbrace{r'_{k+1},\dots, r'_{k+1}}^{n-k-1}, Id,\dots, Id\}.$$ Then we have that  $w_i^{(k)}\equiv r(k)  e_S^{-1} g_i e_S r(k)^{-1}$, for $0\leq k<i$. This finishes the proof of the claim. 

\medskip\noindent
Finally, as $h_{n-1}=z_{n-1}$, the result follows by induction using the product: \[ h_{i+1} w_i^{(i-1)}  z_i  h_{i+1}  z_{i}^{-1} {w_i^{(i-1)}}^{-1}  h_{i+1}^{-1}.\]

\end{proof} 

\begin{proposition}
Let $X$ be a subset of the  Artin generators $\Sigma$ of $\mathcal{B}_n$. Then $BV_n(A_X)$ is generated by $2n$ elements in $BV_n(A_X)$, namely the set $$\{x_0,\dots , x_{n-1}\} \cup \{h_i\,|\, \sigma_i \in \Sigma\setminus X\}\cup \{h_{m(n)-1} \}\cup \{g_i\,|\, \sigma_i \in X\}.$$
Similarly, $BV_n(\alpha^{-1} A_X \alpha)$, $\alpha\in \mathcal{B}_n$ , is generated by the following set of $2n$ elements: $$\{ h_\alpha^{-1}x_0h_\alpha,\dots , h_\alpha^{-1} x_{n-1} h_\alpha\} \cup \{ h_\alpha^{-1} h_ih_\alpha\,|\, \sigma_i \in \Sigma\setminus X\}\cup \{h_\alpha^{-1} h_{m(n)-1}h_\alpha \} \cup \{ h_\alpha^{-1} g_\alpha^{-1}g_i g_\alpha h_\alpha \,|\, \sigma_i \in X\},$$
where $h_\alpha:= \left(T(n), \alpha, \overrightarrow{Id}, T(n)\right)$, $g_\alpha:=\left(R,Id,\{\alpha,Id,\dots, Id\}, R\right)$ and $R$ is the $n$-ary tree of depth $1$.
\end{proposition}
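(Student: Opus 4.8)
The plan is to establish the standard parabolic case by assembling the generator-reduction lemmas of this section together with a bookkeeping count, and then to obtain the conjugate case from it through a single explicit conjugation.

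For $BV_n(A_X)$ I would start from the fact that $X$ is a generating set of $H=A_X$ in the sense of \autoref{remark:gen}. Substituting the generators $g_j$ by the set $\{(R,Id,\{s,Id,\dots,Id\},R):s\in X\}=\{g_i:\sigma_i\in X\}$, that remark shows $BV_n(A_X)$ is generated by the elements $e_T$ (for trees of depth less than $5$, or $4$ when $n\geq 3$), the elements $h_1,\dots,h_{m(n)-1}$, and the $g_i$ with $\sigma_i\in X$. I would then trim this list: \autoref{lemma3:fg} replaces all $e_T$ by $x_0,\dots,x_{n-1}$; \autoref{lemma2:fg} replaces every $h_i$ with $i\geq n$ by words in $h_1,\dots,h_{n-1}$ and $h_{m(n)-1}$; and \autoref{lemma4:fg} eliminates exactly those $h_i$ with $\sigma_i\in X$, expressing them through $e$-type generators, the $h_j$ with $\sigma_j\in\Sigma\setminus X$, and the $g_i$ with $\sigma_i\in X$ (all already present). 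What survives is precisely
\[
\{x_0,\dots,x_{n-1}\}\cup\{h_i:\sigma_i\in\Sigma\setminus X\}\cup\{h_{m(n)-1}\}\cup\{g_i:\sigma_i\in X\},
\]
and since $|\Sigma\setminus X|=(n-1)-|X|$, this set has $n+(n-1-|X|)+1+|X|=2n$ elements. Each lies in $BV_n(A_X)$: the trivial-label generators lie in $BV_n=BV_n(Id)\leq BV_n(A_X)$, while $g_i$ with $\sigma_i\in X$ carries the label $\sigma_i\in A_X$.

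For the conjugate $\alpha^{-1}A_X\alpha$ I would exhibit an explicit isomorphism and transport the generating set just found. Let $d_\alpha:=(R,Id,\{\alpha,\dots,\alpha\},R)\in BV_n(\langle\alpha\rangle)\leq BV_n(\mathcal{B}_n)$ be the element carrying the recursive braid $\alpha$ on every strand. The key geometric claim is that conjugation by $d_\alpha$ is an isomorphism $BV_n(A_X)\to BV_n(\alpha^{-1}A_X\alpha)$, with inverse given by conjugation by $d_\alpha^{-1}$: writing $v=(T,\beta,\{h_1,\dots,h_m\},T')$ and expanding $d_\alpha$ so that both its trees equal $T$, the uniform $\alpha$-labels can be slid along $\beta$ by $3$-moves (and past splits and merges by $5$- and $6$-moves), because the label is the same on every strand; this leaves $\beta$ and the trees untouched and replaces each $h_i$ by $\alpha^{-1}h_i\alpha$, so that $d_\alpha^{-1}\,v\,d_\alpha=(T,\beta,\{\alpha^{-1}h_1\alpha,\dots,\alpha^{-1}h_m\alpha\},T')$, which lies in $BV_n(\alpha^{-1}A_X\alpha)$ exactly when $v\in BV_n(A_X)$. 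Applying this formula to the first-part generators, the trivial-label generators $x_j$, $h_i$, $h_{m(n)-1}$ are fixed (their labels satisfying $\alpha^{-1}Id\alpha=Id$), while $g_i$, whose only non-trivial label is $\sigma_i$ in the first slot, is sent to $(R,Id,\{\alpha^{-1}\sigma_i\alpha,Id,\dots,Id\},R)=g_\alpha^{-1}g_i g_\alpha$. Post-composing with the inner automorphism $v\mapsto h_\alpha^{-1}v h_\alpha$ of $BV_n(\alpha^{-1}A_X\alpha)$ — legitimate since $h_\alpha\in BV_n$ normalises this subgroup — turns the images into exactly the listed words $h_\alpha^{-1}x_j h_\alpha$, $h_\alpha^{-1}h_i h_\alpha$, $h_\alpha^{-1}h_{m(n)-1}h_\alpha$ and $h_\alpha^{-1}g_\alpha^{-1}g_i g_\alpha h_\alpha$. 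As an isomorphism carries a generating set to a generating set, these $2n$ elements generate $BV_n(\alpha^{-1}A_X\alpha)$.

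The main obstacle I expect is precisely the geometric claim in the conjugate case: verifying carefully that conjugation by $d_\alpha$ fixes $\beta$ and the trees and acts on the labels purely by $h\mapsto\alpha^{-1}h\alpha$. This reduces to checking that a uniform recursive $\alpha$ commutes past an arbitrary crossing through a sequence of $3$-moves and past the split/merge structure through $5$- and $6$-moves, so that in particular $d_\alpha$ commutes with the pure tree elements $x_j$ and with the braid generators $h_i$. This is exactly the diagrammatic bookkeeping the section flags as intuitive once drawn, but it carries the real content; everything else reduces to reindexing the lemmas and verifying the $2n$ count.
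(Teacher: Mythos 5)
Your first half (the $BV_n(A_X)$ case) is correct and is exactly the paper's argument: cite \autoref{remark:gen}, then trim with \autoref{lemma2:fg}, \autoref{lemma3:fg} and \autoref{lemma4:fg}, and count. The second half breaks, and it breaks exactly where you flagged the risk: the formula $d_\alpha^{-1}\,v\,d_\alpha=(T,\beta,\{\alpha^{-1}h_1\alpha,\dots,\alpha^{-1}h_m\alpha\},T')$ is false. Conjugation by $d_\alpha$ does restrict to an isomorphism $BV_n(A_X)\to BV_n(\alpha^{-1}A_X\alpha)$ --- every strand of every expansion of $d_\alpha^{\pm1}$ carries the label $\alpha^{\pm1}$, so all labels get conjugated --- but it does not leave $(T,\beta,T')$ untouched. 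A uniform label cannot be slid through a split or a merge for free: by the recursion rule (equivalently, the $6$-move), expanding a strand labelled $\alpha$ produces $n$ strands \emph{braided by} $\alpha$, each labelled $\alpha$ again, so every caret of $T$ and of $T'$ leaves behind an honest copy of the braid $\alpha$ in the diagram. Concretely, take $n=2$, $\alpha=\sigma_1$ and $v=(R[0],Id,\overrightarrow{Id},R[1])\in F_2$ (this is $x_0^{-1}$). To compose one must expand: $d_{\sigma_1}^{-1}$ with range tree $R[0]$ is $(R[0],\sigma_1^{-1},\{\sigma_1^{-1},\sigma_1^{-1},\sigma_1^{-1}\},R[0])$, and $d_{\sigma_1}$ with domain tree $R[1]$ is $(R[1],\sigma_2,\{\sigma_1,\sigma_1,\sigma_1\},R[1])$, the braids now living in $\mathcal{B}_3$. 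Multiplying out, the labels cancel but the braids do not:
\[
d_{\sigma_1}^{-1}\,v\,d_{\sigma_1}=\left(R[0],\ \sigma_1^{-1}\sigma_2,\ \overrightarrow{Id},\ R[1]\right)\neq v,
\]
and the two sides already induce different permutations of the leaves ($00\mapsto 11$ versus $00\mapsto 0$), so no sequence of moves identifies them. Hence $d_\alpha$ commutes neither with the $x_j$ nor with the $h_i$; your transported set is $\{d_\alpha^{-1}x_jd_\alpha\}\cup\{d_\alpha^{-1}h_id_\alpha\}\cup\{d_\alpha^{-1}g_id_\alpha\}$, which is a generating set of the right cardinality but not the one in the statement, and post-conjugating by $h_\alpha$ does not convert one into the other.

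The paper's proof never leaves the target group, which is how it avoids this issue. It applies \autoref{remark:gen} directly to $H=\alpha^{-1}A_X\alpha$ with generating set $\{\alpha^{-1}\sigma_i\alpha\mid\sigma_i\in X\}$, observing that $(R,Id,\{\alpha^{-1}\sigma_i\alpha,Id,\dots,Id\},R)=g_\alpha^{-1}g_ig_\alpha$; this gives the generating set $\{x_0,\dots,x_{n-1}\}\cup\{h_1,\dots,h_{m(n)-1}\}\cup\{g_\alpha^{-1}g_ig_\alpha\mid\sigma_i\in X\}$. Since $h_\alpha$ has trivial labels it lies in $BV_n(\alpha^{-1}A_X\alpha)$, so conjugating this whole set by $h_\alpha$ is an inner automorphism and again yields a generating set. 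The reduction to $2n$ elements is then obtained by re-running the proofs of \autoref{lemma2:fg} and \autoref{lemma4:fg} with everything conjugated ($\sigma_q$ by $\alpha$; $e$- and $h$-type generators by $h_\alpha$; $g$-type generators by $g_\alpha h_\alpha$). The $h_\alpha$-conjugation, which in your write-up looks like an optional cosmetic step, is precisely what makes the braid-versus-label cancellations in \autoref{lemma4:fg} work for the twisted labels $\alpha^{-1}\sigma_i\alpha$. If you want to rescue your transport strategy, you would have to compute $d_\alpha^{-1}x_jd_\alpha$, $d_\alpha^{-1}h_id_\alpha$ and $d_\alpha^{-1}g_id_\alpha$ honestly and then express these elements as words in the listed generators --- which is essentially the content of the paper's conjugated lemmas.
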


\begin{proof}
For $A_X$, this a direct consequence of \autoref{remark:gen}, \autoref{lemma2:fg}, \autoref{lemma3:fg} and \autoref{lemma4:fg}.
As conjugacy defines a group isomorphism, it is easy to proof (see \autoref{remark:gen}) that $$h_\alpha^{-1} \{ x_0,\dots, x_{n-1},h_1,\dots, h_{m(n)-1} \} h_\alpha \cup  h_\alpha^{-1}g_\alpha^{-1}\{g_i\,|\, \sigma_i \in X\} g_\alpha h_\alpha$$ generates $BV_n(\alpha^{-1} A_X \alpha)$. This set can be reduced to 
$$h_\alpha^{-1} \{ x_0,\dots, x_{n-1},h_1,\dots,h_{n-1}, h_{m(n)-1} \} h_\alpha \cup h_\alpha^{-1}g_\alpha^{-1}\{g_i\,|\, \sigma_i \in X\} g_\alpha h_\alpha$$ by reproducing the proof of \autoref{lemma2:fg} with all elements conjugated by $h_\alpha$. We need to show that one does not need $h_i$ such that $\sigma_i\in X$ in this generating system. To prove that, we encourage the reader to rewrite the proof of \autoref{lemma4:fg}, doing the following conjugacy changes:

\medskip
\noindent

\begin{itemize}

\item Redefine $w_i^{(k)}:=h_\alpha^{-1}\left(T(n),Id, \{\mu_j\}_{j=0}^{m(n)-1}, T(n)\right) h_\alpha$ with  $ \mu_k= \alpha^{-1}\sigma_i\alpha$ and $\mu_j = Id$ when $j\neq k$.

\item Redefine $z_i:= h_\alpha^{-1}\left(T(n),\sigma_i, \{\xi_j\}_{j=0}^{m(n)-1}, T(n)\right) h_\alpha $ with $\xi_j =\alpha^{-1}\sigma_i \alpha$ when $j=i+1,\dots, n-1$ and $\xi_j=Id$ otherwise. 

\item Conjugate generators of type $e$ and $h$ by $h_\alpha$ and conjugate generators of type $g$ by $g_\alpha h_\alpha$.

\item Conjugate all $\sigma_q$ by $\alpha$.

\end{itemize}

\end{proof}

\subsection{Finite generation for \texorpdfstring{$\boldsymbol{BV_{n,r}(H)}$}{BVnrH}}

It is straightforward to prove that the groups $BV_{n,r}(H)$ are finitely generated by using the methods of Section~4.1 together with a result of \citep{Brown}, whose proof is included hereunder for completeness.

\begin{theorem}
\label{thm:F}
Let $n\geq 2$. Then $F_{n,r} \simeq \widehat{F_n}$ for every $r \geq 1$.
\end{theorem}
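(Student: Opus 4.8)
The plan is to realize both groups as groups of order-preserving homeomorphisms and to glue them together across a single distinguished ``point at infinity''. Write $X_r = \bigsqcup_{i=1}^{r}\mathfrak{C}_n$ for the disjoint union of $r$ copies of $\mathfrak{C}_n$ on which $F_{n,r}$ acts, and $X_\infty = \bigsqcup_{i\geq 0}\mathfrak{C}_n$ for the half-infinite row of copies on which $\widehat{F_n}$ acts (with eventual shifts), as described in Section~2.1. The first step is to produce a \emph{standard} homeomorphism $\phi\colon X_r\setminus\{p\}\to X_\infty$, where $p$ is the global maximum of $X_r$, i.e.\ the point $(n-1)(n-1)(n-1)\cdots$ of the last copy. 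This $\phi$ comes from the spine decomposition already present in the paper: iterating the final caret on the last copy realizes $\mathfrak{C}_n\setminus\{p\}$ as the countable disjoint union $\bigsqcup_{k\geq 0}\bigsqcup_{j=0}^{n-2} C_{(n-1)^k j}$ of standard clopen pieces, each a copy of $\mathfrak{C}_n$. Together with the first $r-1$ copies of $X_r$ this yields countably many copies, which $\phi$ matches order-preservingly with the copies of $X_\infty$, so that copy $(r-1)+k(n-1)+j$ of $X_\infty$ corresponds to $C_{(n-1)^k j}$.

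The second step is the key observation that every element of $F_{n,r}$ fixes $p$, since an order-preserving homeomorphism must fix the global maximum. Hence conjugation by $\phi$ defines a homomorphism $\Phi_r\colon F_{n,r}\to \Homeo_+(X_\infty)$ by $g\mapsto \phi\circ\bigl(g|_{X_r\setminus\{p\}}\bigr)\circ\phi^{-1}$. This map is injective because $p$ is not isolated, so $g$ is recovered by continuity from its restriction to $X_r\setminus\{p\}$. That $\Phi_r$ is a homomorphism is immediate, being conjugation by a fixed homeomorphism.

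The third step is to check that the image of $\Phi_r$ is exactly $\widehat{F_n}$. Since $g\in F_{n,r}$ is locally affine, near $p$ it sends a standard neighbourhood $C_{(n-1)^k}$ of the last copy affinely onto some $C_{(n-1)^{k'}}$; transporting this through $\phi$ shows that $\Phi_r(g)$ acts on all sufficiently large copies of $X_\infty$ as the shift by $d=(k'-k)(n-1)$, while on the remaining finitely many copies it performs an arbitrary finite forest rearrangement. This is precisely the ``eventually a shift'' description of $\widehat{F_n}$, and it also explains why the shift is forced to be a multiple of $n-1$. Conversely, any $h\in\widehat{F_n}$ has a finite active region together with an eventual shift; pulling it back by $\phi^{-1}$ and extending continuously across $p$ (the extension being forced and locally affine because $h$ is eventually a shift) produces an order-preserving, finite-forest element of $F_{n,r}$ with $\Phi_r$-image $h$. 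Thus $\Phi_r$ is an isomorphism for every $r\geq 1$.

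The main obstacle I expect is the bookkeeping in the third step: one must make the spine identification $\phi$ explicit enough to verify \emph{both} directions of the correspondence simultaneously, namely that finite forest rearrangements of $X_r$ away from $p$ match the finite active regions of $\widehat{F_n}$, and that affine behaviour of $g$ near $p$ matches integer shifts by multiples of $n-1$. The rest of the argument (injectivity, and the fact that every order-preserving element fixes $p$) is essentially formal, and the whole construction is visibly uniform in $r$, which is what yields $F_{n,r}\simeq\widehat{F_n}$ for all $r\geq 1$ at once.
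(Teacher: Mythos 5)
Your proposal is correct and follows essentially the same route as the paper: your homeomorphism $\phi$ is precisely the paper's ``infinite right spine appended to the last root'' identification, under which restriction/conjugation carries $F_{n,r}$ isomorphically onto $\widehat{F_n}$. You simply spell out the details the paper leaves implicit (the fixed point $p$, injectivity by density, and the matching of eventual shifts, necessarily multiples of $n-1$, with affine behaviour near $p$).
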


\begin{proof}

Every $V_{n,r}$ contains an isomorphic copy of $\widehat{V_n}$ in the following way: consider the set of $r$~roots of~$V_{n,r}$ and expand the last one by adding a final caret to the $r$-th root. We continue this process expanding the rightmost leaf of the resulting tree, and so on. The final result is an infinite right spine appended to the last root. This tree is invariant by any element of $F_{n,r}$, so the restriction map $F_{n,r} \rightarrow \widehat{F_n}$ is an isomorphism.
\end{proof}

\begin{theorem}
If $H \leq \mathcal{B}_n$ is finitely generated, the groups $BV_{n,r}(H)$ are finitely generated for all $n\geq2, r\geq 1$.
\end{theorem}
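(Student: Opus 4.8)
The plan is to run the argument of \autoref{sec:fg} one more time, now with forests of $r$ trees in place of single trees, feeding in the finite generation of $F_{n,r}$ as the only genuinely new ingredient. The starting point is the observation, already recorded at the end of \autoref{sec:3}, that the braided diagrams, the six moves, and hence the bijection of \autoref{cor:bijection} are insensitive to the number of main sources and main sinks of a diagram: every move is local, acting inside a single caret, a single crossing, or a single strand. Consequently each statement in the proof of \autoref{theorem:fg} that is phrased in terms of braided diagrams transports verbatim to $BV_{n,r}(H)$, where an element is now a tuple $(F_1,\beta,\lambda,F_2)$ with $F_1,F_2$ forests of $r$ finite full $n$-ary trees.

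First I would generalise \autoref{prop:fg} and \autoref{cor:fg}. Given an element of $BV_{n,r}(H)$ whose forests have total depth $d$ large enough, at least one of the $r$ trees is deep, hence contains final carets to which ribbons can be attached exactly as in \autoref{prop:fg}; the reference tree $T_3$ and the spine tree $S_d$ are chosen inside that deep component. Since ribbons, $1$-moves and the spine construction never touch the roots of the other trees, the same three-factor decomposition lowers the depth, and the induction terminates at forests of bounded depth (at most $5$, or $4$ when $n\geq 3$). There are only finitely many forests of $r$ trees of bounded depth, so this produces a finite list of generators, each of one of three shapes: a forest-type element $e_F$ lying in $F_{n,r}$ (trivial braid and trivial labels), a braid-type element $h_i$, and a recursive-braid element $g_j$ carrying a single label.

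Next I would dispose of the braid-type and recursive-braid generators. The reductions of \autoref{lemma1:fg} and \autoref{lemma2:fg} are again purely local manipulations of crossings and white vertices inside one deep component, so they carry over unchanged and leave only $h_{m(n)-1}$ among the $h_i$; likewise, if $S_H$ is a finite generating set of $H$, \autoref{remark:gen} replaces the infinitely many possible labels by the finite family of $g$-type elements built from $S_H$. Thus the braid and label contributions are finitely generated precisely when $H$ is.

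The remaining, and only new, point is the forest-type part: the bounded-depth elements $e_F$ all lie in $F_{n,r}$, so it suffices to know that $F_{n,r}$ is finitely generated, and by \autoref{thm:F} it is isomorphic to $\widehat{F_n}$, whose finite generation is the cited result of \citep{Brown}. Combining the three finite families yields a finite generating set for $BV_{n,r}(H)$. The main obstacle is therefore not a single hard estimate but the bookkeeping needed to check that the ribbon-and-spine depth reduction really bottoms out correctly once there are $r$ roots: unlike the single-tree case, a forest cannot be reduced below $r$ roots, so one must verify that the induction stops at a genuinely finite collection of bounded-depth forests and that the forest-type residue is absorbed by the finitely generated group $F_{n,r}$ rather than by an uncontrolled family of base cases.
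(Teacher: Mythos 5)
Your proposal is correct and follows essentially the same route as the paper: the paper's own (much terser) proof likewise adapts the type $g$, $h$ and $e$ generators from Section~4.1 to forests and then replaces the type~$e$ generators by a finite generating set of $F_{n,r}$ obtained from the isomorphism $F_{n,r}\simeq \widehat{F_n}$ of \autoref{thm:F}. Your write-up simply makes explicit the "easy but laborious exercise" that the paper leaves to the reader, including the correct observation that the moves and the depth-reduction argument are insensitive to the number of roots.
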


\begin{proof}
All the ideas applied in this section can be adapted to any $BV_{n,r}(H)$, getting similar generators of type $g$, $h$ and $e$. This is an easy but laborious exercise. In particular, the generators of type $e$ are elements in $F_{n,r}$, so we can replace them by the generators of $F_{n,r}$, which can be obtained via the isomorphism of \autoref{thm:F}.
\end{proof}

\begin{figure}[h]

\medskip

\centering
\labellist

\pinlabel $1$ at 4 60
\pinlabel $r-1$ at 29 60
\pinlabel $r$ at 19 27
\pinlabel $r+1$ at 27 20
\pinlabel $r+2$ at 35 27
\pinlabel $r+n$ at 35 4

\pinlabel $1$ at 95 49
\pinlabel $2$ at 101.5 49
\pinlabel $3$ at 107 49
\pinlabel $4$ at 113 49
\endlabellist
\includegraphics[scale=1.5]{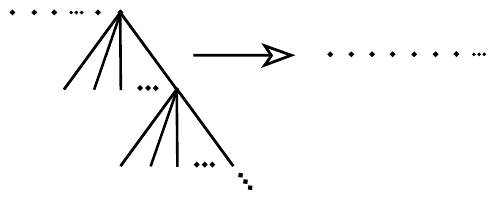}
\caption{The isomorphism between $F_{n,r}$ and $\widehat{F_n}$.}
\label{fig:browntheorem}
\end{figure}

\noindent
The next step on the study of these groups is to ask whether they are finitely presented.

\bigskip
\noindent
\textbf{Conjecture.} $BV_{n,r}(H)$ is finitely presented when $H$ is finitely presented.

\bigskip
Our guess is that this conjecture should be true, because it is likely that Brin and Dehornoy methods to find presentations for $BV_2$ can be extended to $BV_{n,r}$ (or at least to $BV_n$). Once a presentation for $BV_{n,r}$ is found, it should be possible to deal with white vertices. We have tried to find a presentation for $BV_{n,r}(H)$ using braided diagrams but we have not succeeded so far.

\bigskip

\noindent{\textbf{\Large{Acknowledgments}}} 

We thank the referee of this paper for a careful reading and a thoughtful report.

\smallskip

The first author wants to acknowledge financial support from the Spanish Ministry of Science and Innovation, through the ``Severo Ochoa Programme for Centres of Excellence in R\&D'' (SEV-2015-0554) and from the Spanish National Research Council, through the ``Ayuda extraordinaria a Centros de Excelencia Severo Ochoa'' (20205CEX001). He is also grateful to Javier Aramayona for conversations around the material in this note.

\smallskip

The second author was supported by the research grants MTM2016-76453-C2-1-P (financed by the Spanish Ministry of Economy and FEDER), US-1263032 (financed by the Andalusian Ministry of Economy and Knowledge and the Operational Program FEDER 2014--2020) and EP/S010963/1 (financed by the Engineering and Physical Sciences Research Council in UK). She wants to thank Heriot-Watt University and University of Burgundy, especially Alexandre Martin and Luis Paris, for the postdoc contracts that have allowed her to continue working on research, and in particular on this paper. She is always grateful to Juan Gonz\'alez-Meneses for his eternal patience and support. 

\smallskip

We dedicated this paper to the memory of Patrick Dehornoy, who left us too soon. His invaluable contributions to the study of braid groups and Garside theory will keep a part of him alive, but we will never forget his kindness, enthusiasm and good humour. Merci pour tout, Patrick.

\bibliography{Bib_Thompson}

\bigskip

\textit{ Julio Aroca, Departamento de  Matem\'aticas,
Universidad Aut\'onoma de Madrid and 
Instituto de Ciencias Matem\'aticas, 
28049 (Madrid), Spain.} \par
 \textit{E-mail address:} \texttt{\href{mailto:julio.aroca@icmat.es}{julio.aroca@icmat.es }} 

 \medskip

\textit{ Mar\'{i}a Cumplido, Department of Mathematics, Heriot-Watt University, Riccarton, EH14 4AS (Edinburgh), Scotland, UK.} \par
 \textit{E-mail address:} \texttt{\href{mailto:M.Cumplido@hw.ac.uk}{M.Cumplido@hw.ac.uk} }

\end{document}